\title{Some consequences of von Neumann algebra uniqueness}
\author{Thierry Giordano}
\address{Department of Mathematics and Statistics\\
University of Ottawa\\
585 King Edward Avenue\\
Ottawa, Ontario\\
KiN 6N5\\
Canada}
\email{giordano@uottawa.ca}
\author{Ping W. Ng}
\address{Department of Mathematics\\
University of Louisiana at Lafayette\\
217 Maxim D. Doucet Hall\\
P. O. Box 41010\\
Lafayette, Louisiana\\
70504-1010\\
USA}
\email{png@louisiana.edu}
\thanks{T.G. was
partially supported by a grant from NSERC Canada.}
\newtheorem{thm}{Theorem}[section]
\newtheorem{prop}[thm]{Proposition}
\newtheorem{lem}[thm]{Lemma}
\newtheorem{cor}[thm]{Corollary}
\newtheorem{df}{Definition}[section]
\newcommand{\A}{\mathcal{A}}
\newcommand{\B}{\mathcal{B}}
\newcommand{\C}{\mathcal{C}}
\newcommand{\F}{\mathcal{F}}
\newcommand{\K}{\mathcal{K}}
\newcommand{\G}{\mathcal{G}}
\newcommand{\Mul}{\mathcal{M}}
\newcommand{\N}{\mathcal{N}}
\newcommand{\h}{\mathcal{H}}
\newcommand{\Fr}{\mathbb{F}}
\def \bib(#1;#2;#3;#4;#5;#6) {{#1}, {\it #2} {#3},
{\bf#4} (#5) {#6}\par\smallskip}
\date{\today}
\subjclass[2010]{46L05, 46L10}
\numberwithin{equation}{section}
\begin{document}

\maketitle

\begin{abstract}
In this note, we derive some consequences of 
the von Neumann algebra uniqueness theorems developed in the
previous paper \cite{CGNN}.  
In particular, 
\begin{enumerate}
\item we solve
a question raised in \cite{FutamuraKataokaKishimoto}, by proving that if $\A$ is a separable simple nuclear C*-algebra and $\pi_i, \, i = 1,2,$ are type III representations of $A$ on a separable Hilbert space, then for $\pi_1$ and $\pi_2$ being algebraically equivalent, it is necessary and sufficient that there is an automorphism $\alpha$ of $A$ such that $\pi_1\circ \alpha$ and $\pi_2$ are quasi-equivalent.  
\item we give a new (short)
proof of the equivalence of injectivity and extreme amenability (of the 
corresponding unitary group)
for countably decomposable
properly infinite von Neumann algebras.
\item using ideas of \cite{Pestov}, we show that the Connes embedding 
problem is equivalent
to many topological groups having the Kirchberg property.  
\end{enumerate}
\end{abstract}

\section{Introduction}

To prove in his famous 1967 paper that the now called Powers factors $R_{\lambda}, \, 0 < \lambda <1, $ are non isomorphic, R.T. Powers \cite{Powers} proved that if $A$ is a UHF C*-algebra and $\pi_i, \, i = 1,2,$ are representations of $A$ on a separable Hilbert space, then for $\pi_1$ and $\pi_2$ being algebraically equivalent (i.e., the von Neumann algebras generated by $\pi_1(A)$ and $\pi_2(A)$ are isomorphic), it is necessary and sufficient that there is an automorphism $\alpha$ of $A$ such that $\pi_1\circ \alpha$ and $\pi_2$ are quasi-equivalent.  
\smallskip

Powers' characterization was generalized either to a larger class of C*-algebras $A$ or to special classes of representations (see \cite{Bratteli}, \cite{KishimotoOzawaSakai}, {\cite{KishimotoFlows}, \cite{KishimotoInternat}}). In the first section of this paper, we resolve a question raised in \cite{FutamuraKataokaKishimoto} by showing that 

\begin{thm} Let $\A$ be a separable simple nuclear C*-algebra,
and let $\pi_1$ and $\pi_2$ be nondegenerate type III representations
of $\A$ on a separable Hilbert space.
Then $\pi_1$ and $\pi_2$ are algebraically equivalent if and only if
there exists $\alpha \in AInn(\A)$ such that 
$\pi_1 \circ \alpha$ and $\pi_2$ are quasi-equivalent.
\end{thm}

The main step of the proof of this theorem is a consequence of \cite{CGNN} Corollary
3.5.
\medskip


In Section 3, we use Voiculescu-type uniqueness theorems developed in  \cite{CGNN} to study extreme amenability. Recall that a topological group $G$ is said to be {\it extremely  amenable} if every continuous action of $G$, on a compact Hausdorff topological space, has a fixed point and that that no locally compact group is extremely amenable \cite{Veech}. However, large classes of interesting ``massive" (i.e., non locally compact) topological groups have been shown to be extremly amenable. A basic example is Gromov and Milman's result that for a separable infinite dimensional Hilbert space $\h$, the unitary group $U(\h)$, given the weak* topology, is extremely amenable \cite{GromovMilman}. Subsequently it was shown in \cite{GP2} that many naturally occuring nonlocally compact groups - coming from operator algebras, dynamical systems and combinatorics are extremely amenable. We give in Theorem \ref{thm:extremeamenability} another proof of the following result, originally proven in \cite{GP2}. 

\begin{thm}  Let $\Mul$ be a countably decomposable
properly infinite injective von Neumann
algebra.
Let $U(\Mul)$ be the unitary group of $\Mul$, given the $\sigma$-strong*
topology.

Then $\Mul$ is injective if and only if $U(\Mul)$ is extremely
amenable.
\end{thm}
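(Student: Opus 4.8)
The plan is to prove the two implications separately; the substantial direction is ``$\Mul$ injective $\Rightarrow U(\Mul)$ extremely amenable'', and this is where the Voiculescu-type uniqueness theorems of \cite{CGNN} are used.

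For the direction ``$U(\Mul)$ extremely amenable $\Rightarrow \Mul$ injective'', first observe that on the bounded set $U(\Mul)$ the $\sigma$-strong*, $\sigma$-strong and $\sigma$-weak topologies coincide: if $u_i, u \in U(\Mul)$ and $u_i \to u$ $\sigma$-weakly, then $\|(u_i - u)\xi\|^{2} = 2\langle \xi,\xi\rangle - 2\operatorname{Re}\langle u_i\xi, u\xi\rangle \to 0$ for every $\xi$ in the standard Hilbert space, and likewise for the adjoints. Since every extremely amenable topological group is amenable, it follows that $U(\Mul)$ is a topologically amenable group in the $\sigma$-weak topology, whence $\Mul$ is injective by de la Harpe's characterization of injectivity in terms of amenability of the ultraweakly topologized unitary group.

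For the converse I would produce a $\sigma$-strong*-dense \emph{extremely amenable} subgroup of $U(\Mul)$; this suffices, since if $H \le G$ is dense and extremely amenable then any jointly continuous action of $G$ on a compact space has a fixed point (a fixed point of $H$ is, by continuity, fixed by $\overline{H}=G$). Because $\Mul$ is properly infinite and countably decomposable it splits as $\Mul \cong \N \bar\otimes B(\ell^{2})$, where $\N = e\Mul e$ is a corner of $\Mul$ and hence injective and countably decomposable. Injectivity (equivalently, approximate finite-dimensionality) of $\N$ allows us to choose a directed family $\{F_i\}$ of finite-dimensional $*$-subalgebras of $\N$ with $1_{\N}\in F_i$ and with $\bigcup_i F_i$ $\sigma$-strong*-dense in $\N$; put $\Mul_i := F_i \bar\otimes B(\ell^{2})$. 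Securing this cofinal directed family of properly infinite injective subalgebras of $\Mul$ whose unitary groups are jointly dense in $U(\Mul)$ is the heart of the argument and the point where the uniqueness/absorption machinery of \cite{CGNN} does the work: in the separable-predual case it is Connes' theorem, while in general one runs a separable-subalgebra exhaustion and glues the pieces consistently.

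Each $\Mul_i$ is a finite direct sum of copies of $B(\ell^{2})$, so $U(\Mul_i)$, with the topology inherited from $U(\Mul)$, is a finite product of copies of the unitary group of a separable Hilbert space in the strong operator topology; by the Gromov--Milman theorem \cite{GromovMilman} and the stability of extreme amenability under finite products, each $U(\Mul_i)$ is extremely amenable. As $\{\Mul_i\}$ is directed by inclusion, $\bigcup_i U(\Mul_i)$ is a directed union of extremely amenable groups, hence extremely amenable (intersect the nonempty compact fixed-point sets along the net). Finally $\bigcup_i U(\Mul_i)$ is $\sigma$-strong*-dense in $U(\Mul)$: by the Kaplansky density theorem applied to self-adjoint parts and the $\sigma$-strong*-continuity of $h \mapsto e^{ih}$ on norm-bounded self-adjoint sets, every $u = e^{ih}\in U(\Mul)$ (write $h = h^{*}\in\Mul$ via Borel functional calculus) is a limit of exponentials $e^{ih_i}$ with $h_i = h_i^{*}$ in $\bigcup_i \Mul_i$. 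This furnishes the required dense extremely amenable subgroup, so $U(\Mul)$ is extremely amenable. The main obstacle, as indicated, is the structural step; one must also check that the restriction of a fixed faithful normal state of $\Mul$ to each $\Mul_i$ remains faithful and normal, so that the inherited topology on $U(\Mul_i)$ is the intrinsic strong one to which \cite{GromovMilman} applies.
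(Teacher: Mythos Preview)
Your ``if'' direction matches the paper's. For ``only if'' your argument is essentially the original \cite{GP2} proof, not the paper's, and your claim that \cite{CGNN} ``does the work'' is mistaken: your construction of the $F_i$ uses Connes' theorem (injectivity $\Rightarrow$ approximate finite-dimensionality), and nothing in \cite{CGNN} produces finite-dimensional approximants or is invoked anywhere in your outline. The paper proceeds quite differently and avoids Connes' structure theorem altogether. Given $x_1,\dots,x_m\in U(\Mul)$ and $\epsilon>0$, it lets $\A$ be the unital C*-subalgebra they generate, chooses any full unital $\psi:\A\to 1_{\N}\otimes \mathbb{B}(\h)\subset\Mul$, forms $\phi(a)=S_1 a S_1^* + S_2\psi(a)S_2^*$ with $S_2S_2^*$ strong*-small so that $\phi(x_k)\approx x_k$, and then applies Proposition~\ref{prop:VNAUniqueness} --- the \cite{CGNN} Voiculescu-type uniqueness, which uses injectivity of $\Mul$ directly --- to obtain a unitary $u$ with $u\psi(x_k)u^*\approx\phi(x_k)\approx x_k$ in $\|\cdot\|^\sharp_\chi$. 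Thus every finite subset of $U(\Mul)$ is approximated by the unitary group of a conjugate of $1_{\N}\otimes \mathbb{B}(\h)\cong \mathbb{B}(\h)$, and Lemma~\ref{lem:pointwise} (locally dense net of extremely amenable subgroups) together with Gromov--Milman finishes. Your route is correct modulo the non-separable-predual caveat you flag, and is the more structural argument; the paper's route substitutes the approximate-unitary-equivalence machinery of \cite{CGNN} for Connes' deep classification, which was the announced purpose of this section.
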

\medskip

Finally we use our techniques to study the Kirchberg property and the Connes embedding problem. The definitions are recalled in Section 4.  In \cite{Pestov}, Pestov and Uspenskij observed that an affirmative answer to the Connes embedding problem is equivalent to $U(\h)$ having the Kirchberg property.
We generalize their result by showing that:

\begin{thm}  The following statements are equivalent:
\begin{enumerate}
\item The Connes embedding problem has an affirmative answer.

\item For every unital separable simple nuclear $C^*$-algebra $\A$,
$U( \Mul( \A \otimes \K))$ has the Kirchberg property.

\item There exists a unital separable simple nuclear $C^*$-algebra $\A$
such that $U(\Mul(\A \otimes \K))$ has the Kirchberg property.
\item For every countably decomposable
injective properly infinite von Neumann algebra $\Mul$,  
$U( \Mul )$ has the Kirchberg property.
\item There exists an injective properly infinite
von Neumann algebra $\Mul$ with separable predual 
such that $U( \Mul )$ has the Kirchberg property.
\end{enumerate}
\end{thm}


\section{Quasi-equivalent representations}

In \cite{FutamuraKataokaKishimoto},  Futamura, Kataoka and Kishimoto prove that if $A$ belongs to to a restricted class of simple, separable, nuclear C*-algebras, and if $\pi_1$ and $\pi_2$ are  type $III$ representations of $A$ on a separable Hilbert space, then they are algebraically equivalent if and only if there is an asymptotically inner automorphism $\alpha$ of $A$ such that $\pi_1\circ \alpha$ and $\pi_2$ are quasi-equivalent. 

 In this section, we show, by modifying our previous results, that Futamura, Kataoka and Kishimoto's statement is true for all  simple, separable, nuclear C*-algebras. Our result also generalizes
 results of Powers  \cite{Powers}, of Bratteli \cite{Bratteli}, of Kishimoto, Ozawa, and Sakai \cite{KishimotoOzawaSakai} and of Kishimoto 
 {\cite{KishimotoFlows}, \cite{KishimotoInternat}}. As indicated in the introduction, Powers 
\cite{Powers} was interested in constructing a continuum of nonisormorphic
type III factors, and Bratteli \cite{Bratteli} in 
representations of AF-algebras, as well as the problem of when 
the pure state space is homogeneous (see also 
\cite{KishimotoOzawaSakai}).  Kishimoto \cite{KishimotoFlows} 
was interested in representations
that were covariant with respect to an approximately inner flow.  
We refer the reader to these papers and their references for details
as well as basic definitions and background results.

The following proposition is an immediate consequence of \cite{CGNN} Corollary
3.5 and is the starting point of our proof. Before stating it, let us recall the notion of a 
{\it full} *-homomorphism:

\begin{df} Let $\A$ and $\B$ be two C*-algebras. Then a *-homomorphism  $\phi : \A \rightarrow \B$ is said to be full if for every nonzero positive element $a \in \A$, its image $\phi(a)$ is a full element of $\B$ (i.e., $\phi(a)$ is not contained in any proper C*-algebra ideal of $\B$). 
\label{df:Full homomorphism}
\end{df} 

Note that a full *-homomorphism is necessarily injective and that conversely  an injective *-homomorphism from a C*-algebra to a type III and countably decomposable von Neumann factor is full.

\begin{prop} Let $\Mul$ be a countably decomposable properly infinite
von Neumann algebra, and 
let  $\A$ be a nuclear C*-algebra.
Let $\phi, \psi : \A \rightarrow \Mul$ be two full *-homomorphisms.

Then there exists a net $\{ v_{\alpha} \}$ of partial isometries 
in $\Mul$ such that 
for all $a \in \A$,
$$\| v_{\alpha}^* \phi(a) v_{\alpha} - \psi(a) \| \rightarrow 0$$ 
\label{prop:NormPartialIsometryUniqueness}
\end{prop}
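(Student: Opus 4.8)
The plan is to reduce Proposition~\ref{prop:NormPartialIsometryUniqueness} to the relevant uniqueness statement in \cite{CGNN} (Corollary 3.5), which provides, for two full $*$-homomorphisms from a nuclear C*-algebra into a countably decomposable properly infinite von Neumann algebra, an \emph{approximate unitary equivalence} statement — typically of the form that for every finite subset $F \subseteq \A$ and every $\varepsilon > 0$ there is a unitary $u \in \Mul$ with $\| u^* \phi(a) u - \psi(a) \| < \varepsilon$ for all $a \in F$ (possibly after allowing a ``$\oplus 0$'' amplification, which is harmless in a properly infinite von Neumann algebra). Since $\A$ is not assumed unital or separable here, I first want to check that Corollary 3.5 applies verbatim or after a routine reduction: if $\A$ is nonunital I would pass to its minimal unitization $\widetilde{\A}$, extending $\phi,\psi$ to unital $*$-homomorphisms $\widetilde{\phi},\widetilde{\psi}: \widetilde{\A} \to \Mul$ (sending $1 \mapsto 1_\Mul$), noting that these remain full because $\Mul$ is properly infinite and the images of the unit are full; and if separability is needed I would restrict attention to a fixed separable C*-subalgebra containing the relevant finite sets, or invoke the appropriate separable-exhaustion remark in \cite{CGNN}.

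With the approximate unitary equivalence in hand, the core of the argument is the passage from \emph{unitaries satisfying a uniform estimate on a finite set} to a \emph{net of partial isometries indexed by a directed set, with convergence on all of $\A$}. Here I take the index set to be $\Lambda = \{(F,\varepsilon): F \subseteq \A \text{ finite}, \varepsilon > 0\}$, directed by $(F_1,\varepsilon_1) \leq (F_2,\varepsilon_2)$ iff $F_1 \subseteq F_2$ and $\varepsilon_1 \geq \varepsilon_2$. For each $\alpha = (F,\varepsilon) \in \Lambda$, apply the uniqueness theorem to obtain $v_\alpha$ (a unitary, hence in particular a partial isometry) with $\| v_\alpha^* \phi(a) v_\alpha - \psi(a) \| < \varepsilon$ for all $a \in F$. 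Then for any fixed $a \in \A$ and any $\delta > 0$, eventually $\alpha \geq (\{a\}, \delta)$ forces $\| v_\alpha^* \phi(a) v_\alpha - \psi(a) \| < \delta$, which is exactly the asserted net convergence. If \cite{CGNN} only yields the estimate after an amplification — i.e.\ $v_\alpha$ is an isometry from $\Mul$ onto a corner — then $v_\alpha$ is a partial isometry with $v_\alpha v_\alpha^* = 1$ (or $v_\alpha^* v_\alpha = 1$), and the same estimate holds; this is why the proposition is stated with partial isometries rather than unitaries.

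The step I expect to require the most care — and the only genuine mathematical content beyond bookkeeping — is verifying that \emph{fullness} of $\phi$ and $\psi$ is the exact hypothesis Corollary 3.5 needs, and that it is preserved under whatever reductions (unitization, restriction to a separable subalgebra, amplification) I perform. Fullness is needed because the uniqueness theorems of \cite{CGNN} are of Voiculescu type: they compare two representations up to approximate unitary equivalence precisely when neither ``misses'' any ideal, and in a properly infinite countably decomposable von Neumann algebra this is detected by the image of every nonzero positive element being full. The remark in the excerpt — that an injective $*$-homomorphism into a type III countably decomposable factor is automatically full — is the bridge that will let this proposition feed into Theorem~1.1, but for the proposition itself I only need to cite fullness as given. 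Once the applicability of Corollary 3.5 is confirmed, the rest is the directed-set argument above, which is routine.
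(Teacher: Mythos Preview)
Your proposal is correct and matches the paper's approach: the paper gives no proof at all, stating only that the proposition is ``an immediate consequence of \cite{CGNN} Corollary~3.5,'' and your directed-set repackaging of the finite-set/$\varepsilon$ conclusion into a net is exactly the routine step that makes ``immediate'' precise. Your hedging about unitization and separability is cautious but harmless; note from the way Lemma~\ref{lem:SomeIsometriesForUniquenessSpecialLemma} later upgrades the $v_\alpha$ to isometries (by testing on $1_\Mul$) that the output of \cite{CGNN} Corollary~3.5 is genuinely partial isometries rather than unitaries, so your amplification scenario is closer to the truth than the unitary one.
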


The next three lemmas generalize arguments from  
\cite{DavidsonBook} Corollary II.5.4, II.5.5 and II.5.6.  

\begin{lem}  Let $\Mul$ be a countably decomposable properly infinite
von Neumann algebra, and
let  $\A$ be a full nuclear C*-subalgebra of $\Mul$ with $1_{\Mul} \in \A$.
Let $\rho : \A \rightarrow \Mul$ be a full $*$-homomorphism such 
that $\rho(1_{\Mul}) = 1_{\Mul}$.   

Then there exists a net $\{ v_{\alpha} \}$ of isometries of $\Mul$
such that for all $a \in \A$.
$$\| v_{\alpha} \rho(a) -  a v_{\alpha} \| \rightarrow 0$$
\label{lem:SomeIsometriesForUniquenessSpecialLemma} 
\end{lem}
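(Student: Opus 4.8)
The plan is to reduce to a finite approximation and then to establish that the inclusion $\iota\colon\A\hookrightarrow\Mul$ and $\rho$ are approximately unitarily equivalent inside $\Mul$, from which the statement follows at once. Since the conclusion concerns a net, it suffices to show: for every finite $F\subseteq\A$ and every $\varepsilon>0$ there is an isometry $v\in\Mul$ with $\|v\rho(a)-av\|<\varepsilon$ for all $a\in F$; one then takes the net indexed by the pairs $(F,\varepsilon)$, directed by inclusion in the first coordinate and reverse order in the second. We may also assume $1_{\Mul}\in F=F^*$.

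The main step is to produce, for given such $F$ and $\varepsilon$, a \emph{unitary} $u\in\Mul$ with $\|u^*au-\rho(a)\|<\varepsilon$ for all $a\in F$; this is the von Neumann algebra counterpart of Voiculescu's absorption theorem (compare \cite{DavidsonBook} Corollary~II.5.4), and it is here that Proposition~\ref{prop:NormPartialIsometryUniqueness} together with the hypotheses on $\Mul$ do the work. Applying Proposition~\ref{prop:NormPartialIsometryUniqueness} to the full $*$-homomorphisms $(\iota,\rho)$ yields partial isometries with $v^*av\to\rho(a)$; since $\iota$ and $\rho$ are unital, evaluation at $1_{\Mul}$ forces $v^*v=1$ eventually, so one obtains isometries $v$ with $\|v^*av-\rho(a)\|$ small on $F$, and symmetrically, applying it to $(\rho,\iota)$, isometries $w$ with $\|w^*\rho(a)w-a\|$ small on $F$. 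Running the usual back-and-forth (absorption) argument on these two one-sided approximations -- using at each stage that $\Mul$ is properly infinite and countably decomposable to lodge the successively smaller ``defect'' projections -- one builds a norm-convergent sequence of unitaries whose limit is a unitary $u$ with $\|u^*au-\rho(a)\|<\varepsilon$ on $F$. I expect this iteration to be the one genuinely non-formal ingredient: Proposition~\ref{prop:NormPartialIsometryUniqueness} supplies only a \emph{conjugation} statement, and a partial isometry $v$ with $v^*av\approx\rho(a)$ need not have its range projection $vv^*$ even approximately commute with $\A$, so the passage from conjugation to a genuine one-sided intertwiner requires the iteration, and proper infiniteness is precisely what keeps that iteration from getting stuck.

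Granting the unitary $u$, the rest is immediate: put $v:=u$, which is an isometry. Multiplying the inequality $\|u^*au-\rho(a)\|<\varepsilon$ on the left by $u$ gives $\|au-u\rho(a)\|<\varepsilon$, that is, $\|v\rho(a)-av\|<\varepsilon$ for all $a\in F$, which completes the reduction and hence the proof.
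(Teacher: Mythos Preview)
Your proposal takes a detour that is both unnecessary and, as written, not fully justified. The key misconception is your assertion that ``the passage from conjugation to a genuine one-sided intertwiner requires the iteration.'' It does not. Once you have an isometry $v$ with $\|v^*av-\rho(a)\|$ small for $a$, $a^*$, and $a^*a$, the one-sided estimate follows from the C*-identity $\|x\|^2=\|x^*x\|$:
\begin{align*}
(v\rho(a)-av)^*(v\rho(a)-av)
&=\rho(a^*)\bigl(\rho(a)-v^*av\bigr)+\bigl(\rho(a^*)-v^*a^*v\bigr)\rho(a)\\
&\quad+\bigl(v^*a^*av-\rho(a^*a)\bigr),
\end{align*}
where $v^*v=1$ was used. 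Each of the three summands is small by the conjugation estimate, so $\|v\rho(a)-av\|$ is small. This is exactly the paper's proof: apply Proposition~\ref{prop:NormPartialIsometryUniqueness}, observe (as you did) that unitality forces $v^*v=1$ eventually, and then do the three-line computation above. No unitaries, no back-and-forth.

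Your plan, by contrast, aims first for the stronger conclusion of approximate \emph{unitary} equivalence and then reads off the lemma. In the paper the logic runs the other way: this lemma is the input to the absorption Lemma~\ref{lem:AnotherSpecialLemmaButWithRho}, which in turn yields approximate unitary equivalence in Proposition~\ref{prop:NormAUEProperlyInfiniteCase}. Your ``usual back-and-forth'' is left vague, and the standard absorption mechanism you would need to make it work is precisely what this lemma feeds into; invoking it here risks circularity. So the proposal is not so much wrong as inverted, and the part you flag as the ``one genuinely non-formal ingredient'' is in fact the part that dissolves once you notice the direct computation.
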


\begin{proof}

    By Proposition \ref{prop:NormPartialIsometryUniqueness}, 
there exists a net $\{ v_{\alpha} \}$ of partial isometries in $\Mul$
such that for all $a \in \A$,  
\begin{equation}
\| \rho(a) - v_{\alpha}^* a v_{\alpha} \| \rightarrow 0
\label{equ:PartialIsometryUniqueness}
\end{equation}

    Note that since $1_{\Mul} \in \A$ and 
$\rho(1_{\Mul}) = 1_{\Mul}$, 
equation (\ref{equ:PartialIsometryUniqueness}) implies
that for every $\epsilon > 0$,
$\| 1_{\Mul} - v_{\alpha}^* v_{\alpha} \| < \epsilon$.
for sufficiently ``large" $\alpha$.  Hence,
we may assume that for all $\alpha$, 
$v_{\alpha}$ is an isometry in $\Mul$.
 
    Hence, for all $a \in \A$,
\begin{eqnarray*}
& & (v_{\alpha} \rho(a) -  a v_{\alpha})^* ( v_{\alpha} \rho(a) 
-  a v_{\alpha}) \\
& = & (\rho(a^*) v_{\alpha}^* - v_{\alpha}^* a^*) ( v_{\alpha} \rho(a)
-  a v_{\alpha}) \\
& = &  \rho(a^*) \rho(a) - \rho(a^*) v_{\alpha}^* a v_{\alpha} 
- v_{\alpha}^* a^* 
v_{\alpha} \rho(a) + v_{\alpha}^* a^* a v_{\alpha} \\
& = & \rho(a^*)(\rho(a) - v_{\alpha}^* a v_{\alpha}) + 
(\rho(a^*) - v_{\alpha}^* a^* v_{\alpha}) \rho(a) + 
(v_{\alpha}^* a^* a v_{\alpha} - \rho(a^* a) ) \\
& \rightarrow & 0  
\end{eqnarray*}
\end{proof}

Let $\A$ be a C*-algebra, and let $\Mul$ be a von Neumann algebra. 
We say that a *-homomorphism 
$\rho : \A \rightarrow \Mul$  is \emph{nondegenerate} if
$1_{\overline{\rho(\A)}^{weak*}} = 1_{\Mul}$.
If $\A$ is a C*-subalgebra of $\Mul$ then 
$\A$ is a \emph{nondegenerate} C*-subalgebra if the inclusion
map $\A \rightarrow \Mul$ is nondegenerate. 

\begin{lem} Let $\Mul$ be a countably decomposable properly
infinite von Neumann algebra.
Let $\A \subseteq \Mul$ be a
full nondegenerate nuclear C*-subalgebra
and let $\rho : \A \rightarrow \Mul$ be a full nondegenerate *-homomorphism. 

Let $S_1, S_2 \in \Mul$ be elements such that 
$S_1^* S_1 = S_2^* S_2 = 1_{\Mul}$ and $S_1 S_1^* + S_2 S_2^* = 1_{\Mul}$,
and consider 
the full nondegenerate *-homomorphism
$$\A \rightarrow \Mul : a \mapsto S_1 a S_1^* + S_2 \rho(a) S_2^*.$$  

Then for every $\epsilon > 0$, for every finite subset $\F \subset \Mul$,
there exists a unitary $w \in \Mul$ such that 
$$\| w (S_1 a S_1^* + S_2 \rho(a) S_2^*)w^* - a \| < \epsilon$$
for all $a \in \F$. 
\label{lem:AnotherSpecialLemmaButWithRho}
\end{lem}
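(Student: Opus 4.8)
The plan is to show that the two unital full nondegenerate $*$-homomorphisms
$$\iota : \A \hookrightarrow \Mul \qquad \text{and} \qquad \sigma : a \mapsto S_1 a S_1^* + S_2 \rho(a) S_2^*$$
are approximately unitarily equivalent, and then to combine this with the isometry version from Lemma \ref{lem:SomeIsometriesForUniquenessSpecialLemma} to upgrade an approximating isometry to a unitary. First I would note that $\sigma$ is indeed full, nondegenerate, and unital (it sends $1_\Mul$ to $S_1 S_1^* + S_2 S_2^* = 1_\Mul$), so Lemma \ref{lem:SomeIsometriesForUniquenessSpecialLemma} applies to the pair $(\A, \sigma)$ inside $\Mul$ as well as to the pair $(\A,\iota)$ (the latter with $\rho = \mathrm{id}$, which is trivially full and nondegenerate). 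Thus for any $\epsilon > 0$ and any finite $\F \subset \A$ there are isometries $v, v' \in \Mul$ with $\|v a - \sigma(a) v\| < \epsilon$ and $\|v' a - a v'\| < \epsilon$ for $a \in \F$... but this by itself gives isometric intertwiners, not a unitary.

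The key device is the classical Davidson-style Fredholm/absorption trick: because $\Mul$ is properly infinite and countably decomposable, $1_\Mul \sim 1_\Mul \oplus 1_\Mul$, and one can absorb a "spare copy" of the representation into itself. Concretely, I would use the $2\times 2$ matrix picture afforded by $S_1, S_2$: write $p = S_1 S_1^*$, $q = S_2 S_2^*$, so that $\sigma$ is, up to the corner identifications implemented by $S_1$ and $S_2$, the direct sum $\iota \oplus \rho$. The statement then amounts to: $\iota \oplus \rho \approx_{\mathrm{u}} \iota$. Feeding $\rho$ back through Lemma \ref{lem:SomeIsometriesForUniquenessSpecialLemma} gives a net of isometries $w_\alpha$ with $\|w_\alpha \rho(a) - a w_\alpha\| \to 0$; combining this with the identity inclusion and the spatial isomorphism $\Mul \cong p\Mul p$ (valid since $p \sim 1_\Mul$, both being properly infinite and $\Mul$ countably decomposable) lets me build, for each $(\epsilon, \F)$, a partial isometry $W$ with $W^* W = 1_\Mul$, $W W^* = p$ (or a projection Murray–von Neumann equivalent to $1_\Mul$ with range "missing" a copy of $1_\Mul$), such that $\|W^* \iota(a) W - \sigma(a)\| < \epsilon$ on $\F$. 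Then $1_\Mul - W W^* \sim 1_\Mul$, so I can complete $W$ to a unitary $w$ by adjoining on the orthogonal complement a partial isometry that implements the absorption of the extra summand, obtaining $\|w \sigma(a) w^* - \iota(a)\| < \epsilon$ for $a \in \F$, which is exactly the claimed estimate.

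I expect the main obstacle to be the bookkeeping in this last completion step: one must be careful that the unitary $w$ is genuinely an element of $\Mul$ (not just of some ampliation), that the proper infiniteness and countable decomposability are invoked correctly to guarantee the relevant Murray–von Neumann equivalences $p \sim 1_\Mul$ and $1_\Mul - WW^* \sim 1_\Mul$, and that the finite-set, $\epsilon$-approximate nature of Lemma \ref{lem:AnotherSpecialLemmaButWithRho} (as opposed to the net formulation of the earlier lemmas) is respected throughout — in particular one cannot pass to a net-limit unitary, so the absorption must be done at a fixed finite level. A secondary subtlety is that the elements being approximated in Lemma \ref{lem:SomeIsometriesForUniquenessSpecialLemma} range over $\A$, while here $\F \subset \Mul$ may contain elements of $\A \subseteq \Mul$; since the displayed inequality is stated for $a \in \F$ and $\sigma$, $\iota$ are only defined on $\A$, one reads $\F \subset \A$, so this causes no real trouble, but it is worth stating explicitly. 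Modulo this careful matrix-unit bookkeeping, the proof is a direct adaptation of \cite{DavidsonBook} Corollary II.5.6 with Proposition \ref{prop:NormPartialIsometryUniqueness} and Lemma \ref{lem:SomeIsometriesForUniquenessSpecialLemma} replacing the Voiculescu theorem and its corollaries.
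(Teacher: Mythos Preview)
Your overall orientation is right --- this is indeed the Davidson-style absorption argument (cf.\ \cite{DavidsonBook} II.5.5--II.5.6), with Lemma~\ref{lem:SomeIsometriesForUniquenessSpecialLemma} supplying the approximate intertwining isometry --- but your sketch of the ``completion'' step contains a genuine gap, and as written it does not go through.

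The difficulty is this: you propose to produce an isometry $W$ with $W^*W = 1_{\Mul}$, $WW^* = p$, and $\|W^*aW - \sigma(a)\| < \epsilon$ on $\F$, and then to ``complete $W$ to a unitary $w$ by adjoining on the orthogonal complement a partial isometry.'' But an isometry with $W^*W = 1$ is already defined on all of $\Mul$; there is no orthogonal complement in the domain on which to adjoin anything. Any unitary $w$ you build will satisfy $w\sigma(a)w^* \approx WW^*aWW^* = pap$ on the part coming from $W$, and whatever you put in the $q$-corner contributes a term of size roughly $\|qaq\|$, which is \emph{not} small. The naive completion therefore fails to give $\|w\sigma(a)w^* - a\| < \epsilon$.

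What is missing is the key device the paper actually uses: one does not apply Lemma~\ref{lem:SomeIsometriesForUniquenessSpecialLemma} to $\rho$ itself, but to an \emph{infinite amplification} $\lambda^{(\infty)}(a) = \sum_{n\geq 1} j_n\,S_2\rho(a)S_2^*\,j_n^*$, built from a sequence of partial isometries $\{j_n\}$ with $j_n^*j_n = q$ and $\sum j_nj_n^* = 1$. One then introduces the unilateral shift $T = \sum j_n j_{n+1}^*$ along this decomposition, which satisfies $TT^* = 1$, $T^*T = 1-q$, and, crucially, $T^*\lambda^{(\infty)}(a) = \lambda^{(\infty)}(a)T^*$. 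With an isometry $v$ obtained from Lemma~\ref{lem:SomeIsometriesForUniquenessSpecialLemma} satisfying $\|v\lambda^{(\infty)}(a) - av\| < \epsilon/5$ on $\F$, the unitary is the explicit element
\[
w \;=\; (1 - vv^* + vT^*v^*)\,T \;+\; v j_1,
\]
and a direct computation (using that one may take $S_1 = T^*$) shows $\|aw - w\sigma(a)\| < \epsilon$ for $a \in \F$. The point is that the shift $T$ absorbs the extra copy of $\rho$ \emph{inside} $\lambda^{(\infty)}$, and the specific algebraic form of $w$ is what makes all the error terms collapse to multiples of $\|av - v\lambda^{(\infty)}(a)\|$. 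This is not a generic ``fill in the complement'' argument; it is the heart of the Voiculescu trick, and your proposal does not yet contain it.
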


\begin{proof}
  Firstly, we may assume that $\A$ is unital (i.e., $1_{\Mul} \in \A$) and
$\rho(1_{\Mul}) = 1_{\Mul}$.  (For otherwise, 
we can replace $\A$ with its unitization
$\A + \mathbb{C}1_{\Mul}$ (which will still be nuclear) and
replace $\rho$ by the  (unitized) map
$a + \alpha 1_{\Mul} \mapsto \rho(a) + \alpha 1_{\Mul}$.  
Here is a rough sketch of the proof that the unitizations are 
still full:  It suffices to prove that if $a \in \A$ is nonzero self-adjoint
and $\alpha \ge 0$ is such that $a + \alpha 1 \in (\A + \mathbb{C}1)_+$, then
$\rho(a) + \alpha 1$ is a full element of $\Mul$.  To do this, find a
nonzero element $c \in \A$ with $0 \leq c \leq a +\alpha1$.
Then $\rho(c)$ is a full element of $\Mul$ and $0 \leq \rho(c) \leq \rho(a) +
\alpha 1$.  Hence, $\rho(a) + \alpha 1$ is a full element of $\Mul$.)

Let $\epsilon > 0$ and a finite subset $\F \subset \A$ be given.
We may assume that $\F$ is self-adjoint; i.e., for all $a \in \A$,
if $a \in \F$ then $a^* \in \F$.

Let $\lambda : \A \rightarrow \Mul$ be given by 
$\lambda(a) =_{df} S_2 \rho(a) S_2^*$ for all $a \in \A$, and
let $q =_{df} \lambda(1) = S_2 S_2^*$.

Let $\{ j_n \}_{n=1}^{\infty}$ be a sequence of partial isometries in $\Mul$
such that
$j_n^* j_n = q$ for all $n \geq 1$, $j_1 = j_1^* = q$, and 
$\sum_{n=1}^{\infty} j_n j_n^* = 1$.

Set $T =_{df} \sum_{n=1}^{\infty} j_n j_{n+1}^* \in \Mul$.
Then
$$T^* T = \sum_{n, m} j_{n+1} j_n^* j_m j_{m+1}^* = 
\sum_{n=1}^{\infty} j_{n+1} j_n^* j_n j_{n+1}^* = 1 - q$$
and 
$$T T^* = \sum_{n,m} j_n j_{n+1}^* j_{m+1} j_m^* = 
\sum_{n=1}^{\infty} j_n j_{n+1}^* j_{n+1} j_n^* = 1.$$  
(Hence, $T^*$ is an isometry.)

Set $\lambda^{(\infty)} : \A \rightarrow \Mul$ by
$\lambda^{(\infty)}(a) =_{df} \sum_{n=1}^{\infty} j_n \lambda(a) j_n^*$
for all $a \in \A$.
Hence, 
$\lambda^{(\infty)} (1) = \sum_{n=1}^{\infty} j_n q j_n^* = 1$.
Hence, $\lambda^{(\infty)}$ is a full unital *-homomorphism.

    By 
Lemma \ref{lem:SomeIsometriesForUniquenessSpecialLemma},  
let $v \in \Mul$ be an isometry such that 
\begin{equation}
\| v \lambda^{(\infty)}(a) - a v \| < \epsilon/5 
\label{equ:AnApplicationOfSomeIsometriesSpecialLemma}
\end{equation}
for all $a \in \F$.
 
Now let 
$$w =_{df} (1 - v v^* + v T^* v^*)T + v j_1 \in \Mul.$$
One can check that $w$ is a unitary of $\Mul$.

Note that since $T^*$ is an isometry with $T T^* = 1- q$, 
we may assume that $S_1 = T^* = \sum_{n=1}^{\infty} j_{n+1} j_n^*$.

Hence, one can check that for all $a \in \A$,  
\begin{eqnarray*}
&  & a w - w (S_1 a S_1^* + S_2 \rho(a) S_2^*)\\ 
&  = &  a w - w (T^* a T + \lambda(a)) \\
& = &  (v v^* a - a v v^*) T + ( a v T^* v^* - v T^* v^* a)T + (a v j_1 -
v j_1 \lambda(a)) 
\end{eqnarray*}

Also, one can check that for all $a \in \A$, we have the following:
\begin{enumerate}
\item $(v v^* a - a v v^*) T =  [ v (a^* v - v \lambda^{(\infty)}(a^*))^*
-(av - v \lambda^{(\infty)}(a))v^*]T$.   
\item $(av T^* v^* - v T^* v^* a )T = [(av - v \lambda^{(\infty)}(a))T^* v^*
- v T^* (a^* v - v \lambda^{(\infty)}(a^*))^*]T$\\
(Note that $T^* \lambda^{(\infty)}(a) = \lambda^{(\infty)}(a) T^*$ for all
$a \in \A$.) 
\item $a v j_1 - v j_1 \lambda(a) = (a v - v \lambda^{(\infty)}(a)) j_1$ 
\end{enumerate}
 
   From the above and from 
(\ref{equ:AnApplicationOfSomeIsometriesSpecialLemma}),
we have that for all $a \in \F$,
$$\| aw - w(S_1 a S_1^* + S_2 \rho(a) S_2^*) \| < \epsilon.$$

Hence, for all $a \in \F$,
$$\| a - w(S_1 a S_1^* + S_2 \rho(a) S_2^*) w^* \| < \epsilon$$
as required.
\end{proof}

\begin{lem}  Let $\Mul$ be a countably decomposable properly infinite
von Neumann algebra and let $\A$ be a nuclear C*-algebra.
Let $\phi, \psi : \A \rightarrow \Mul$ be full nondegenerate *-homomorphisms,
and let $S_1, S_2 \in \Mul$ be elements with
$S_i^* S_i = 1$ ($i = 1,2$) and
$S_1 S_1^* + S_2 S_2^* = 1$.

Then $\psi$ and $Ad(S_1)\psi + Ad(S_2)\phi$ are (norm-) approximately
unitarily equivalent. I.e., 
there exists a net $\{ u_{\alpha} \}$ of unitaries in 
$\Mul$ such that for all $a \in \A$,
$$\| u_{\alpha}^* \psi(a) u_{\alpha} - (S_1 \psi(a) S_1^* + 
S_2 \phi(a) S_2^* ) \| \rightarrow 0$$
\label{lem:StillAnotherSpecialLemmaButThisTimeNoRho}
\end{lem}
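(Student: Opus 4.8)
The plan is to reduce the statement to Lemma~\ref{lem:AnotherSpecialLemmaButWithRho} by transporting the whole picture inside $\Mul$ along $\psi$. Recall that a full $*$-homomorphism is necessarily injective, so $\psi$ is a $*$-isomorphism of $\A$ onto the C*-subalgebra $\B =_{df} \psi(\A) \subseteq \Mul$, and in particular $\B$ is nuclear. First I would check that $\B$ is a full nondegenerate C*-subalgebra of $\Mul$: every nonzero positive element of $\B$ is of the form $\psi(a)$ with $0 \neq a \in \A_+$, hence full in $\Mul$ since $\psi$ is full, and $1_{\overline{\B}^{weak*}} = 1_{\overline{\psi(\A)}^{weak*}} = 1_{\Mul}$ since $\psi$ is nondegenerate. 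Then I would define $\rho : \B \rightarrow \Mul$ by $\rho(\psi(a)) =_{df} \phi(a)$ for $a \in \A$; this is a well-defined $*$-homomorphism because $\psi$ is injective, and running the same two checks with $\phi$ in place of $\psi$ shows that $\rho$ is full and nondegenerate. The pair $S_1, S_2$ satisfies verbatim the hypotheses imposed on it in Lemma~\ref{lem:AnotherSpecialLemmaButWithRho}.

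With these identifications, I would simply invoke Lemma~\ref{lem:AnotherSpecialLemmaButWithRho} applied to $\B$, $\rho$, $S_1$, $S_2$. Given a finite subset $\F \subseteq \A$ and $\epsilon > 0$, feeding the finite set $\psi(\F) \subseteq \B$ into that lemma produces a unitary $w \in \Mul$ such that $\| w(S_1 \psi(a) S_1^* + S_2 \phi(a) S_2^*) w^* - \psi(a) \| < \epsilon$ for all $a \in \F$, since $\rho(\psi(a)) = \phi(a)$. Because conjugation by the unitary $w^*$ is isometric, this is equivalent to $\| w^* \psi(a) w - (S_1 \psi(a) S_1^* + S_2 \phi(a) S_2^*) \| < \epsilon$ for all $a \in \F$. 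Letting $(\F, \epsilon)$ range over the directed set whose first coordinate is the finite subsets of $\A$ ordered by inclusion and whose second coordinate is $(0,\infty)$ ordered reversely, and taking $u_{(\F,\epsilon)}$ to be the corresponding $w$, I obtain a net of unitaries of $\Mul$ with $\| u_\alpha^* \psi(a) u_\alpha - (S_1 \psi(a) S_1^* + S_2 \phi(a) S_2^*) \| \rightarrow 0$ for every $a \in \A$, which is the desired conclusion.

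I do not anticipate a real obstacle here: granting Lemma~\ref{lem:AnotherSpecialLemmaButWithRho}, this lemma is essentially a repackaging of it. The only genuine steps are the (routine) verifications that the image $\psi(\A)$ and the induced map $\rho$ inherit fullness and nondegeneracy from $\psi$ and $\phi$, together with the standard conversion of an ``$\epsilon$ on finite sets'' statement into a convergent net. Alternatively, one can bypass the subalgebra $\B$ and argue directly that the unitaries furnished by Lemma~\ref{lem:AnotherSpecialLemmaButWithRho} (applied to the inclusion $\psi(\A) \hookrightarrow \Mul$ and to $\rho$) intertwine $Ad(S_1)\psi + Ad(S_2)\phi$ and $\psi$ up to $\epsilon$ on finite sets, which is the same computation phrased without introducing extra names.
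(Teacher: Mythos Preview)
Your proposal is correct and follows essentially the same approach as the paper: identify $\A$ with its image $\psi(\A)$ (using that a full $*$-homomorphism is injective), so that $\psi$ becomes the inclusion, and then invoke Lemma~\ref{lem:AnotherSpecialLemmaButWithRho} with $\rho = \phi \circ \psi^{-1}$. The paper's proof is terser and leaves the routine verifications (fullness, nondegeneracy, net construction) to the reader, but the content is the same.
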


\begin{proof}
Since $\psi$ is full and nondegenerate, we can identify $\A$ with $\psi(\A)
\subset \Mul$, view $\A$ as a full nondegenerate C*-subalgebra of $\Mul$, and 
replace $\psi$ with the inclusion map.
The result then follows from Lemma \ref{lem:AnotherSpecialLemmaButWithRho}
(where we take $\rho = \phi$). 
\end{proof}

\begin{prop}
Let $\Mul$ be a countably decomposable properly infinite von Neumann algebra 
and let $\A$ be a nuclear C*-algebra.
Let $\phi, \psi : \A \rightarrow \Mul$ be full nondegenerate
*-homomorphisms.

Then $\phi$ and $\psi$ are approximately unitarily equivalent in norm.
I.e., there exists a net $\{ u_{\alpha} \}$ of unitaries in $\Mul$
such that for all $a \in \A$,
$$\| u_{\alpha}^* \phi(a) u_{\alpha} - \psi(a) \| \rightarrow 0$$ 
\label{prop:NormAUEProperlyInfiniteCase}
\end{prop}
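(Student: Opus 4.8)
The plan is to deduce the proposition formally from Lemma \ref{lem:StillAnotherSpecialLemmaButThisTimeNoRho} by a ``flip-and-absorb'' argument, so that the only genuinely new ingredients are the production of a suitable pair of isometries and a two-line algebraic verification. First I would fix isometries $S_1, S_2 \in \Mul$ with $S_1^* S_1 = S_2^* S_2 = 1_{\Mul}$ and $S_1 S_1^* + S_2 S_2^* = 1_{\Mul}$. Since $\Mul$ is properly infinite there are isometries $v_1, v_2$ with orthogonal ranges; setting $e =_{df} 1_{\Mul} - v_1 v_1^* - v_2 v_2^*$ and noting $v_2 v_2^* \leq v_2 v_2^* + e \leq 1_{\Mul}$ with $v_2 v_2^* \sim 1_{\Mul}$, Cantor--Bernstein for projections in a von Neumann algebra gives $v_2 v_2^* + e \sim 1_{\Mul}$; replacing $v_2$ by a partial isometry with initial projection $1_{\Mul}$ and final projection $v_2 v_2^* + e$ produces the desired $S_1, S_2$.

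Next I would record that norm approximate unitary equivalence --- the relation $\sim_{au}$ asserting, for every finite $\F \subset \A$ and every $\epsilon > 0$, the existence of a unitary $u \in \Mul$ with $\sup_{a \in \F} \| u^* \phi(a) u - \psi(a) \| < \epsilon$ --- is an equivalence relation (in particular transitive), and that honest unitary equivalence implies $\sim_{au}$. I would also note that $Ad(S_1)\phi + Ad(S_2)\psi$ and $Ad(S_1)\psi + Ad(S_2)\phi$ are again full nondegenerate $*$-homomorphisms $\A \to \Mul$: each is built from unitarily conjugated copies of the full nondegenerate maps $\phi, \psi$ sitting on the complementary corners $S_1 S_1^*$ and $S_2 S_2^*$ whose sum is $1_{\Mul}$, so Lemma \ref{lem:StillAnotherSpecialLemmaButThisTimeNoRho} applies to them.

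The argument is then the following chain. Applying Lemma \ref{lem:StillAnotherSpecialLemmaButThisTimeNoRho} with the roles of $\phi$ and $\psi$ interchanged,
\begin{equation*}
\phi \;\sim_{au}\; Ad(S_1)\phi + Ad(S_2)\psi .
\end{equation*}
The element $u_0 =_{df} S_1 S_2^* + S_2 S_1^*$ is a self-adjoint unitary (using $S_i^* S_j = \delta_{ij} 1_{\Mul}$ and $S_1 S_1^* + S_2 S_2^* = 1_{\Mul}$), and one checks $u_0 (S_1 x S_1^* + S_2 y S_2^*) u_0^* = S_1 y S_1^* + S_2 x S_2^*$ for all $x, y \in \Mul$; hence $Ad(u_0)$ exchanges the two summands and
\begin{equation*}
Ad(S_1)\phi + Ad(S_2)\psi \quad\text{is unitarily equivalent to}\quad Ad(S_1)\psi + Ad(S_2)\phi .
\end{equation*}
Finally, Lemma \ref{lem:StillAnotherSpecialLemmaButThisTimeNoRho} applied directly gives $\psi \sim_{au} Ad(S_1)\psi + Ad(S_2)\phi$. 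Concatenating the three relations and using transitivity of $\sim_{au}$ yields $\phi \sim_{au} \psi$, which is the assertion.

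I do not expect a real obstacle at this stage: all of the analytic content --- the Voiculescu-type absorption ultimately coming from \cite{CGNN} Corollary 3.5 via Proposition \ref{prop:NormPartialIsometryUniqueness} --- has already been packaged into Lemma \ref{lem:StillAnotherSpecialLemmaButThisTimeNoRho}. What remains is the choice of $S_1, S_2$ (routine, using proper infiniteness), the verification that $u_0$ is a unitary conjugating as claimed (a short computation with the Cuntz relations), and the observation that the ``doubled'' maps are still full and nondegenerate so that the lemma may be re-applied. The mild care needed is only in the bookkeeping of nets versus the $(\F,\epsilon)$-formulation of $\sim_{au}$, for which the two descriptions are readily seen to coincide.
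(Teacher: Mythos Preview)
Your proposal is correct and follows essentially the same route as the paper: both arguments apply Lemma~\ref{lem:StillAnotherSpecialLemmaButThisTimeNoRho} twice and use transitivity of approximate unitary equivalence to conclude. The paper is marginally slicker in that it avoids your explicit flip unitary $u_0$ by noting that the lemma holds for any choice of Cuntz isometries, so one may arrange both applications to land directly on the same object $Ad(S_1)\psi + Ad(S_2)\phi$; your insertion of the flip is harmless but unnecessary.
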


\begin{proof}
Let $S_1, S_2 \in \Mul$ be isometries with $S_1 S_1^* + S_2 S_2^* = 1$.   
By applying Lemma
\ref{lem:StillAnotherSpecialLemmaButThisTimeNoRho} twice,
we have that $\phi$ and $Ad(S_1) \psi + Ad(S_2) \phi$
are (norm-) approximately unitarily equivalent, and 
$\psi$ and $Ad(S_1) \psi + Ad(S_2) \phi$ are (norm-) approximately
unitarily equivalent.
Hence, $\phi$ and $\psi$ are (norm-) approximately unitarily equivalent,
as required. 
\end{proof}

As a corollary of Proposition 
\ref{prop:NormAUEProperlyInfiniteCase}, we get the following: 

\begin{cor}  Let $\Mul$ be a countably generated von Neumann, and let
$\A$ be a nontype I simple nuclear C*-algebra.
Suppose that $\phi, \psi : \A \rightarrow \Mul$
are  nondegenerate *-homomorphisms.
Suppose also that either
\begin{enumerate}
\item $\A$ is purely infinite, or 
\item $\Mul$ has no type II summand, or
\item $\A$ is unital and $\Mul$ has no type $II_1$ summand.
\end{enumerate}
Then $\phi$ and $\psi$ are both (norm-) approximately unitarily equivalent.
I.e., there exists a net $\{ u_{\alpha} \}$ of unitaries in $\Mul$
such that for all $a \in \A$,
$$\| u_{\alpha}^* \phi(a) u_{\alpha} - \psi(a) \| \rightarrow 0$$
\label{cor:NormAUE}
\end{cor}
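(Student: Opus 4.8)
The plan is to reduce each of the three cases to Proposition~\ref{prop:NormAUEProperlyInfiniteCase}, whose hypotheses are that $\Mul$ be countably decomposable and properly infinite and that the two $*$-homomorphisms be full and nondegenerate. So there are two issues to address: first, why the $*$-homomorphisms $\phi,\psi$ are automatically full (given simplicity of $\A$), and second, why one may assume $\Mul$ is countably decomposable and properly infinite in each of the three cases. For fullness, recall that $\A$ is simple, so it has no proper ideals; thus any nonzero $*$-homomorphism is injective, and for any nonzero positive $a\in\A$ the element $\phi(a)$ is nonzero, and — since $\A$ is simple — every nonzero positive element is full in $\A$, hence $\phi(a)$ generates $\phi(\A)$ as an ideal. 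A small extra argument is needed to pass from ``full in $\phi(\A)$'' to ``full in $\Mul$'': here nondegeneracy of $\phi$ enters, since $\overline{\phi(\A)}^{weak*}$ contains $1_\Mul$, so the weak$^*$-closed ideal of $\Mul$ generated by $\phi(a)$ contains the central cover of $1_{\overline{\phi(\A)}^{weak*}}=1_\Mul$, hence is all of $\Mul$. (If $\phi(a)$ is nonzero and generates $\overline{\phi(\A)}^{weak*}$ as a weak$^*$-closed ideal, and $\overline{\phi(\A)}^{weak*}\ni 1_\Mul$, the closed two-sided ideal of $\Mul$ it generates must be $\Mul$ itself.) So in all three cases $\phi$ and $\psi$ are full nondegenerate $*$-homomorphisms.

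The second issue — arranging that $\Mul$ is countably decomposable and properly infinite — is where the three separate hypotheses (1), (2), (3) come in, and I expect this bookkeeping to be the main (though not deep) obstacle. First, since $\A$ is separable, $\phi(\A)$ and $\psi(\A)$ are separable, so the von Neumann subalgebra $\N$ they generate (together with $1_\Mul$) has separable predual, hence is countably decomposable; replacing $\Mul$ by $\N$ is harmless because approximate unitary equivalence of the corestricted maps $\A\to\N$ gives approximate unitary equivalence of $\phi,\psi$ as maps into $\Mul$ (the unitaries lie in $\N\subseteq\Mul$). So we may assume from the start that $\Mul$ has separable predual, hence is a countable direct integral / direct sum decomposition into its type I, II$_1$, II$_\infty$ and III parts. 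It remains to see, case by case, that the relevant summands of $\Mul$ are properly infinite and absorb the others. In case (2), $\Mul$ has no type II summand; I would argue that since $\A$ is simple and not type I, a full nondegenerate $*$-homomorphism $\A\to\Mul$ cannot hit a type I summand (the image would generate a type I von Neumann algebra, forcing $\A$ itself to be type I, contradiction), so the relevant compression of $\Mul$ is of type III, hence countably decomposable and properly infinite. In case (1), $\A$ is purely infinite; I would use that a purely infinite simple C*-algebra contains an infinite projection, and its image under a full $*$-homomorphism is an infinite projection in $\Mul$ that (by fullness) has central cover $1_\Mul$, which forces $\Mul$ to be properly infinite. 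In case (3), $\A$ is unital and $\Mul$ has no type II$_1$ summand; again the type I part is excluded as in case (2), and the absence of a type II$_1$ part leaves only II$_\infty$ and III, both properly infinite, so $\Mul$ is properly infinite; countable decomposability was already arranged.

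Having arranged in each of the three cases that $\Mul$ (or the relevant von Neumann subalgebra through which both maps factor) is countably decomposable, properly infinite, and that $\phi,\psi$ are full and nondegenerate, I would simply invoke Proposition~\ref{prop:NormAUEProperlyInfiniteCase} to obtain the net $\{u_\alpha\}$ of unitaries with $\|u_\alpha^*\phi(a)u_\alpha-\psi(a)\|\to 0$ for all $a\in\A$, which is exactly the conclusion. The hard part is really the careful case analysis excluding type I and forcing proper infiniteness from the hypotheses on $\A$ and $\Mul$; once that structure theory is in place, the analytic content is entirely supplied by the earlier proposition. One technical point to be careful about: in the non-unital subcases one should confirm that nondegeneracy is preserved when passing to the von Neumann subalgebra generated by $\phi(\A)\cup\psi(\A)$, which it is, since that algebra is generated (as a von Neumann algebra) by $\phi(\A)$ together with $\psi(\A)$ and their common unit is $1_\Mul$ by the nondegeneracy hypothesis on $\phi$ and $\psi$ individually.
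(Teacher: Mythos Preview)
Your overall strategy matches the paper's: reduce to Proposition~\ref{prop:NormAUEProperlyInfiniteCase} by arranging that $\Mul$ is countably decomposable and properly infinite and that $\phi,\psi$ are full. However, your uniform argument for fullness has a real gap. You show that the weak*-closed ideal of $\Mul$ generated by $\phi(a)$ contains $1_\Mul$ and then conclude that ``the closed two-sided ideal of $\Mul$ it generates must be $\Mul$ itself.'' But fullness in Definition~\ref{df:Full homomorphism} refers to \emph{norm}-closed ideals, and properly infinite von Neumann algebras typically have proper norm-closed ideals whose weak* closure is everything (the compacts in $\mathbb{B}(\h)$, the ideal generated by finite projections in a $II_\infty$ factor). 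Nondegeneracy together with simplicity of $\A$ gives only that $\phi(a)$ has central cover $1$, which is strictly weaker. The paper does not attempt a uniform argument; it verifies fullness separately in each case using the extra structure: in~(1) by pushing forward a properly infinite projection from $\A$ and showing its image is Murray--von Neumann equivalent to $1_\Mul$; in~(2) by producing a projection $p$ with $\phi(a)\le p\le\phi(b)$ and using that in a type~III algebra every nonzero projection with central carrier $1$ is equivalent to $1_\Mul$; and in~(3) directly from $\phi(1_\A)=1_\Mul$.

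There is a second problem with your reduction to separable predual. Replacing $\Mul$ by the von Neumann subalgebra $\N$ generated by $\phi(\A)\cup\psi(\A)$ gives separable predual, but hypotheses~(2) and~(3) concern the type decomposition of $\Mul$ and do not pass to subalgebras (a type~III factor contains type~$II_1$ subfactors), so you can no longer invoke them for $\N$. The paper instead uses the ``countably generated'' hypothesis on $\Mul$ to write it as a direct product of algebras with separable predual and works componentwise; direct summands, unlike subalgebras, inherit the type restrictions. (A minor side remark: your exclusion of all type~I summands is too strong --- a non-type-I simple $\A$ still has irreducible representations into $\mathbb{B}(\h)$, which is type~$I_\infty$. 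What is true, and what the paper uses, is that such an $\A$ has no nonzero finite-dimensional representations, so $\Mul_{I_f}=0$; the type~$I_\infty$ part sits inside the properly infinite summand and causes no trouble.)
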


\begin{proof}
Firstly, since $\Mul$ is countably generated, it is a (possibly
infinite) direct product of
von Neumann algebras with separable predual.  
Hence, we may assume that $\Mul$ has separable predual.

Decompose $\Mul$ into a direct sum 
$\Mul = \Mul_{\infty} \oplus \Mul_{II_1} \oplus \Mul_{I_f}$
where $\Mul_{\infty}$ is properly infinite, 
$\Mul_{II_1}$ is type $II_1$ and $\Mul_{I_f}$ is finite type I.

Since $\A$ is nontype I and simple, and since 
we have a nondegenerate *-homomorphism $\phi : \A \rightarrow \Mul$,
$\Mul_{I_f} = 0$. Hence,
$$\Mul = \Mul_{\infty} \oplus \Mul_{II_1}.$$

Let us first assume (1).   
Since $\A$ is simple purely infinite and since $\phi : \A \rightarrow \Mul$
is a  nondegenerate *-homomorphism, $\Mul_{II_1} = 0$.
Hence, $\Mul = \Mul_{\infty}$; i.e., $\Mul$ is properly infinite.
Since $\A$ is simple purely infinite, let $p \in \A$ be a nonzero properly
infinite projection. 
Hence, $\phi(p)$ and $\psi(p)$ are properly infinite projections
in $\Mul$. Since $\A$ is simple and $\phi, \psi$ are nondegenerate,
we have, by \cite{KadisonVolume2} Corollary 6.3.5, that 
$\phi(p)$ and $\psi(p)$ are both Murray-von Neumann equivalent
to $1_{\Mul}$.
Hence, since $\A$ is simple, $\phi$ and $\psi$ are full. Hence,
by Proposition \ref{prop:NormAUEProperlyInfiniteCase}, 
$\phi$ and $\psi$ are (norm-) approximately unitarily equivalent.

Next, assume (2).   
Since $\Mul$ has not type $II$ summand,
$\Mul_{II_1} = 0$ and $\Mul = \Mul_{\infty}$ is properly infinite.
Indeed, $\Mul$ is a type III von Neumann algebra with separable predual.
Let $a, b \in \A$ be nonzero positive elements such that 
$0 \leq a \leq b$ and $ab = a$.
Hence, there exist (necessarily nonzero) projections $p, q \in \Mul$
such that $\phi(a) \leq p \leq \phi(b)$
and $\psi(a) \leq q \leq \psi(b)$. (E.g., take $p, q$ to be the support
projections of $\phi(a), \psi(a)$ respectively.)     
Since $\phi(a) \neq 0$, $\psi(a) \neq 0$, and since $\Mul$ is type III,
$p, q$ must be properly infinite projections in $\Mul$ (e.g., see
\cite{KadisonVolume2} Proposition 6.3.7).   
Since $\phi, \psi$ are nondegenerate and since
$\phi(a) \leq p$ and $\psi(a) \leq q$, 
$p, q$ both must have central carrier $1$.  Hence,
by \cite{KadisonVolume2} Corollary 6.3.5, $p, q$ are both Murray--von Neumann
equivalent to $1$. 
Hence, since $p \leq \phi(b)$ and $q \leq \psi(b)$, and since 
$\A$ is simple, $\phi, \psi$ are full *-homomorphisms.
Hence, by Proposition \ref{prop:NormAUEProperlyInfiniteCase},    
$\phi$ and $\psi$ are (norm-) approximately unitarily equivalent, 
as required.

Finally, assume (2).  Since $\Mul$ does not contain a type $II_1$ direct
summand, $\Mul = \Mul_{\infty}$; i.e., $\Mul$ is properly infinite.  
Since $\A$ is unital and since $\phi, \psi$ are nondegenerate,
$\phi(1_{\A}) = \psi(1_{\A}) = 1_{\Mul}$.
Hence, since $\A$ is simple, $\phi, \psi$ are full *-homomorphisms.
Hence, by Proposition \ref{prop:NormAUEProperlyInfiniteCase},  
$\phi$ and $\psi$ are (norm-) approximately unitarily equivalent.
This completes the final case. 
\end{proof}

Using Corollary \ref{cor:NormAUE}, we have the following result,
which generalizes \cite{FutamuraKataokaKishimoto} Corollaries 3.6 and
3.8.

\begin{cor} Let $\A$ be a simple separable nuclear C*-algebra,
and let $\pi_1$ and $\pi_2$ be nondegenerate 
representations of $\A$ on a separable
Hilbert space $\h$ such that such that 
$\pi_1(\A)'' = \pi_2(\A)'' = \Mul$.
Suppose that either
\begin{enumerate}
\item $\A$ is purely infinite or 
\item $\Mul$ does not contain a type II direct summand or 
\item $\A$ is unital and $\Mul$ does not contain a type $II_1$
direct summand.
\end{enumerate}
Then there exists a sequence $\{ u_n \}$ of unitaries in $\Mul$
such that for all $a \in \A$,
$$\| u_n^* \pi_1(a) u_n - \pi_2(a) \| \rightarrow 0$$
\label{cor:FirstGeneralization}
\end{cor}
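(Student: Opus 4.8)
The plan is to deduce this from Corollary~\ref{cor:NormAUE} and then convert the net of unitaries it provides into a sequence, using that $\A$ is separable.

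First I would check that the hypotheses of Corollary~\ref{cor:NormAUE} are met. Since $\pi_1,\pi_2$ are nondegenerate representations on $\h$ with $\pi_i(\A)'' = \Mul$, the von Neumann bicommutant theorem gives $\overline{\pi_i(\A)}^{weak*} = \Mul$, so in particular $1_{\overline{\pi_i(\A)}^{weak*}} = 1_{\Mul}$; thus $\pi_1,\pi_2 : \A \to \Mul$ are nondegenerate $*$-homomorphisms. Moreover $\Mul$ acts on the separable Hilbert space $\h$, hence has separable predual and is in particular countably generated. Conditions (1), (2), (3) in the statement are verbatim conditions (1), (2), (3) of Corollary~\ref{cor:NormAUE}. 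The only discrepancy is that Corollary~\ref{cor:NormAUE} also requires $\A$ to be non-type~I; this is harmless, since a simple type~I C*-algebra is elementary (isomorphic to $\K(\h_0)$), and for such $\A$ the conclusion is a classical fact about representations of $\K(\h_0)$: the constraint $\pi_1(\A)'' = \pi_2(\A)''$ forces $\pi_1$ and $\pi_2$ to be spatially equivalent by a unitary of $\Mul$. So we may assume $\A$ is not type~I, and then Corollary~\ref{cor:NormAUE} applies with $\phi = \pi_1$, $\psi = \pi_2$: there is a net $\{u_\alpha\}$ of unitaries in $\Mul$ with $\|u_\alpha^* \pi_1(a) u_\alpha - \pi_2(a)\| \to 0$ for every $a \in \A$.

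It remains to replace the net by a sequence, and here separability of $\A$ does everything. Fix a dense sequence $\{a_k\}_{k \ge 1}$ in $\A$. For each $n$, using the convergence of the net at the finitely many points $a_1,\dots,a_n$, pick $u_n$ from $\{u_\alpha\}$ with $\|u_n^* \pi_1(a_k) u_n - \pi_2(a_k)\| < 1/n$ for $k = 1,\dots,n$. Given $a \in \A$ and $\varepsilon > 0$, choose $a_k$ with $\|a - a_k\| < \varepsilon/3$; then for every $n \ge \max\{k, 3/\varepsilon\}$, using that $b \mapsto u_n^* \pi_1(b) u_n$ and $\pi_2$ are contractive,
$$\|u_n^* \pi_1(a) u_n - \pi_2(a)\| \le \|\pi_1(a - a_k)\| + \|u_n^* \pi_1(a_k) u_n - \pi_2(a_k)\| + \|\pi_2(a_k - a)\| < \varepsilon .$$
Hence $\|u_n^* \pi_1(a) u_n - \pi_2(a)\| \to 0$ for all $a \in \A$, as required.

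I do not expect a genuine obstacle: the substance is entirely contained in Corollary~\ref{cor:NormAUE}, and the two remaining points — that nondegeneracy of the representations turns them into nondegenerate $*$-homomorphisms into $\Mul$, and the diagonal extraction of a sequence from the net — are routine. The only thing to watch is the non-type~I hypothesis of Corollary~\ref{cor:NormAUE}, which is why the type~I (elementary) case is disposed of separately at the outset.
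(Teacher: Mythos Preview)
Your proposal is correct and follows essentially the same approach as the paper: dispose of the simple type~I case separately (the paper cites Davidson's book, Corollary~I.10.6 and Theorem~I.10.7, for this), and in the non-type~I case invoke Corollary~\ref{cor:NormAUE}. You are more explicit than the paper in verifying the nondegeneracy and countable-generation hypotheses and in spelling out the net-to-sequence extraction via separability of $\A$, but these are exactly the details the paper's two-line proof leaves implicit.
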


\begin{proof}
 If $\A$ is isomorphic to the algebra of compact operators on a separable
Hilbert space, then the result follows from \cite{DavidsonBook}  
Corollary I.10.6 and Theorem I.10.7.   
Hence, we may assume that $\A$ is not type I. 

If $\A$ is nontype I then the result follows from
Corollary \ref{cor:NormAUE}.
\end{proof}

Let $\A$ be a C*-algebra.
Let $\pi_1$ and $\pi_2$ be nondegenerate representations of $\A$
on Hilbert spaces $\h_1$ and $\h_2$ respectively.
Recall that $\pi_1$ and $\pi_2$ are said to be \emph{algebraically 
equivalent} if $\pi_1(\A)''$ and $\pi_2(\A)''$ are isomorphic as
von Neumann algebras.  Recall that $\pi_1$ and $\pi_2$ are said to 
be \emph{quasi-equivalent} if the map $\pi_1(a) \mapsto \pi_2(a)$
(for $a \in \A$) extends to an isomorphism of $\pi_1(\A)''$ onto
$\pi_2(\A)''$.

Next, recall that a *-automorphism $\alpha \in Aut(\A)$ is said to be
\emph{asymptotically inner} if there is a continuous
path $\{ u_t \}_{t \in [0, \infty)}$ of unitaries in 
$\A$ (or $\A + \mathbb{C} 1$ if $\A$ is nonunital) such that 
$lim_{t \rightarrow \infty} u_t^* a u_t = \alpha(a)$ for all $a \in \A$.
We let $AInn(\A)$ denote the group of asymptotically inner automorphisms
of $\A$.

The next result generalizes \cite{FutamuraKataokaKishimoto}
Corollary 4.3 and part of Corollary 4.4. 

\begin{thm} Let $\A$ be simple separable nuclear C*-algebra.
Let $\pi_1$ and $\pi_2$ be nondegenerate representations of $\A$
such that 
$\pi_1(\A)''$ (or $\pi_2(\A)''$) has separable predual.
Suppose that one of the following conditions hold:
\begin{enumerate}
\item $\A$ is purely infinite.
\item $\pi_1(\A)''$ (or $\pi_2(\A)''$) contains no type II summand
\item $\A$ is unital and $\pi_1(\A)''$ (or $\pi_2(\A)''$) does not
contain a type $II_1$ summand. 
\end{enumerate}

Then $\pi_1$ and $\pi_2$ are algebraically equivalent if and only if
there exists an $\alpha \in AInn(\A)$ such that $\pi_1 \circ \alpha$
and $\pi_2$ are quasi-equivalent.
\label{thm:SecondGeneralization}
\end{thm}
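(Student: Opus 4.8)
The plan is to prove the two implications separately. The ``if'' direction is immediate: if $\alpha\in AInn(\A)$ (indeed any $\alpha\in Aut(\A)$) and $\pi_1\circ\alpha$ is quasi-equivalent to $\pi_2$, then $(\pi_1\circ\alpha)(\A)''=\pi_1(\A)''$ because $\alpha(\A)=\A$, so $\pi_1(\A)''\cong\pi_2(\A)''$ and $\pi_1,\pi_2$ are algebraically equivalent.

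For the converse, suppose $\pi_1$ and $\pi_2$ are algebraically equivalent and fix an isomorphism $\theta:\pi_1(\A)''\to\pi_2(\A)''=:\Mul$. Since $\pi_1(\A)''\cong\pi_2(\A)''$, each of the standing hypotheses (separable predual, no type II summand, no type $II_1$ summand) holds for one of the two algebras if and only if it holds for both, so I may assume all of them hold for $\Mul$. As $\Mul$ has separable predual it has a faithful normal representation on a separable Hilbert space, and replacing $\Mul$ by its image I regard $\sigma_1:=\theta\circ\pi_1$ and $\sigma_2:=\pi_2$ as nondegenerate representations of $\A$ on a separable Hilbert space with $\sigma_1(\A)''=\sigma_2(\A)''=\Mul$. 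If $\A$ is type I then, being simple and separable, $\A\cong\K(\h_0)$ for some separable (possibly finite-dimensional) Hilbert space $\h_0$; every nonzero representation of $\A$ is then a factor representation and all of them are quasi-equivalent, so the conclusion holds with $\alpha$ the identity. Hence I assume from now on that $\A$ is not type I.

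Applying Corollary \ref{cor:FirstGeneralization} to $\sigma_1$ and $\sigma_2$ — its alternatives (1), (2), (3) are exactly the present (1), (2), (3) once we work with $\Mul$ — yields a sequence $\{u_n\}$ of unitaries in $\Mul$ with
$$\|u_n^*\sigma_1(a)u_n-\sigma_2(a)\|\longrightarrow 0\qquad(a\in\A),$$
and, as in the proof of Corollary \ref{cor:NormAUE}, $\Mul$ is in fact properly infinite in each of the three cases. The remaining and principal step is to promote this norm approximate unitary equivalence to the existence of $\alpha\in AInn(\A)$ with $\sigma_1\circ\alpha$ quasi-equivalent to $\sigma_2$; composing the implementing isomorphism $\Mul\to\Mul$ with $\theta$ then exhibits an isomorphism $\pi_1(\A)''\to\pi_2(\A)''$ carrying $\pi_1(\alpha(a))$ to $\pi_2(a)$, that is, $\pi_1\circ\alpha$ and $\pi_2$ are quasi-equivalent, as required.

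To carry out this last step I would run the Powers-type approximate intertwining of \cite{FutamuraKataokaKishimoto} (following \cite{KishimotoOzawaSakai}, \cite{KishimotoFlows}, \cite{KishimotoInternat}): fix a dense sequence in $\A$; after passing to a subsequence of $\{u_n\}$, arrange that consecutive unitaries intertwine $\sigma_1$ and $\sigma_2$ on ever larger finite sets within rapidly decreasing tolerances; then build inductively unitaries $v_n$ in the unitization of $\A$ so that $Ad(v_n)$ converges, in the point-norm topology, to an automorphism $\alpha\in Aut(\A)$, $\sigma_1\circ Ad(v_n)$ converges to a representation quasi-equivalent to $\sigma_2$, and each correction $v_{n+1}v_n^*$ is both close to $1$ and approximately central, hence joinable to $1$ by a short continuous path of unitaries in the unitization; concatenating these paths yields the continuous path of unitaries witnessing $\alpha\in AInn(\A)$. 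The only input about representations of $\A$ that this argument uses is a uniqueness statement of exactly the form of Corollary \ref{cor:FirstGeneralization}, so it goes through unchanged over the whole class of simple separable nuclear C*-algebras. I expect essentially all of the difficulty to lie in this step, and within it in replacing the ambient corrections $u_{n+1}u_n^*\in\Mul$ by corrections $v_{n+1}v_n^*$ inside the unitization of $\A$ that are simultaneously small and almost central — this is where simplicity, separability and nuclearity of $\A$ are used decisively, and it is the technical heart inherited from \cite{FutamuraKataokaKishimoto}. The reduction above, the transfer of the type conditions through $\theta$, the verification that $\Mul$ is properly infinite in each case, and the type I case are all routine.
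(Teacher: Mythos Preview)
Your proposal is correct and follows essentially the same route as the paper: reduce via an isomorphism $\theta$ (the paper's $\Phi$) to the case $\sigma_1(\A)''=\sigma_2(\A)''$, invoke Corollary~\ref{cor:FirstGeneralization} to obtain norm approximate unitary equivalence, and then use the intertwining machinery of \cite{FutamuraKataokaKishimoto} to produce $\alpha\in AInn(\A)$. The only substantive difference is packaging: the paper simply cites \cite{FutamuraKataokaKishimoto} Theorem~4.1 as a black box for that last step, whereas you sketch its mechanism and flag it as the hard part; your separate treatment of the type~I case is harmless but redundant, since Corollary~\ref{cor:FirstGeneralization} already covers it.
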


\begin{proof}
 The ``if'' direction is clear. 

 We now prove the ``only if'' direction.  

Case 1: 
$\pi_1(\A)'' = \pi_2(\A)''$.

In this case, the result follows from Corollary \ref{cor:FirstGeneralization}
and \cite{FutamuraKataokaKishimoto} Theorem 4.1. 

Case 2:  General case.

Let $\Phi : \pi_1(\A)'' \rightarrow \pi_2(\A)''$ be an isomorphism.
Consider the nondegenerate representation
$\Phi \circ \pi_1$ of $\A$. 
Since $\Phi$ is normal and by hypothesis,
$(\Phi \circ \pi_1)(\A)'' = \Phi( \pi_1(\A)'') = \pi_2(\A)''$. 
Hence, by Case 1, 
there exists 
$\alpha \in AInn(\A)$ such that 
$\Phi \circ \pi_1 \circ \alpha$ and  $\pi_2$ are quasi-equivalent.
Since $\pi_1 \circ \alpha$ and $\Phi \circ \pi_1 \circ \alpha$
are quasi-equivalent, 
$\pi_1 \circ \alpha$ and $\pi_2$ are quasi-equivalent.
\end{proof}

As a consequence of the above result, we resolve a question in 
\cite{FutamuraKataokaKishimoto}:

\begin{thm} Let $\A$ be a separable simple nuclear C*-algebra,
and let $\pi_1$ and $\pi_2$ be nondegenerate type III representations
of $\A$ on a separable Hilbert space.
Then $\pi_1$ and $\pi_2$ are algebraically equivalent if and only if
there exists $\alpha \in AInn(\A)$ such that 
$\pi_1 \circ \alpha$ and $\pi_2$ are quasi-equivalent.
\end{thm}

\section{Extreme amenability}

A (Hausdorff) topological group $G$ is said to 
be \emph{extremely amenable}
if every continuous action of $G$ on a compact Hausdorff space
has a fixed point.

 If $\Mul$ is an injective  
von Neumann algebra with no type I summand
then its unitary group $U(\Mul)$ 
with the $\sigma$-strong* topology
is extremely amenable (see \cite{GP2} Theorem 3.3 and Corollary 3.6).
We refer the reader to \cite{GP2} for the background and basic 
definitions 
in the subject.  


  In this section, if $\Mul$ is a
countably decomposable properly infinite injective von Neumann algebra,
we use our uniqueness results from \cite{CGNN}  
to given another proof of the extreme amenability of $U(\Mul)$.

  Let $\A, \C$ be C*-algebras. Then a *-homomorphism
$\rho : \A \rightarrow \C$ is said to be \emph{full} if 
for every nonzero positive element $a \in \A$,
$\rho(a)$ is a full element of $\C$. If $\A$ is a C*-subalgebra of $\C$, 
then $\A$ is said to be a \emph{full C*-subalgebra} of $\C$ if
the inclusion map $\A \subseteq \C$ is full.    
  
  We will be using the following result, which is a 
consequence of \cite{CGNN} Proposition 4.3 and Lemma 3.9. 

\begin{prop} Let $\Mul$ be a countably decomposable properly infinite
injective von Neumann algebra, and let $\A$ be a unital C*-algebra.
Suppose that 
$\phi, \psi : \A \rightarrow \Mul$ are two unital
full *-homomorphisms.

Then there exists a net $\{ u_{\alpha} \}$ of unitaries in $\Mul$
such that for all $a \in \A$, 
$$u_{\alpha} \phi(a) u_{\alpha}^*  \rightarrow \psi(a)$$
in the $\sigma$-strong* topology.
\label{prop:VNAUniqueness}
\end{prop}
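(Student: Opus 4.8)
The plan is to reduce the statement to its partial-isometry analogue, supplied by the uniqueness theorem of the companion paper, and then to complete the resulting partial isometries to unitaries using unitality together with the properly infinite structure of $\Mul$. In contrast with the norm results of Section~2 (e.g.\ Proposition~\ref{prop:NormAUEProperlyInfiniteCase}), here $\A$ is an arbitrary unital C*-algebra and no nuclearity is assumed; the injectivity of $\Mul$ is what plays the compensating role, at the price of replacing norm convergence by convergence in the $\sigma$-strong* topology.

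First I would invoke \cite{CGNN} Proposition~4.3, the $\sigma$-strong* uniqueness theorem for full $*$-homomorphisms into a countably decomposable properly infinite \emph{injective} von Neumann algebra. Since $\phi$ and $\psi$ are full, this produces a net $\{v_\alpha\}$ of partial isometries in $\Mul$ with $v_\alpha^* \phi(a) v_\alpha \to \psi(a)$ $\sigma$-strong* for every $a \in \A$ (the exact analogue of Proposition~\ref{prop:NormPartialIsometryUniqueness}, but with nuclearity traded for injectivity). Writing $p_\alpha = v_\alpha^* v_\alpha$ and specializing to $a = 1$, unitality of $\phi$ and $\psi$ gives $p_\alpha = v_\alpha^* \phi(1) v_\alpha \to \psi(1) = 1$, so that $1 - p_\alpha \to 0$ $\sigma$-strong*.

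Second, I would upgrade the $v_\alpha$ to unitaries. Setting $q_\alpha = v_\alpha v_\alpha^*$, one has $p_\alpha \sim q_\alpha$ automatically, and the aim is to adjoin a partial isometry $w_\alpha$ with $w_\alpha^* w_\alpha = 1 - p_\alpha$ and $w_\alpha w_\alpha^* = 1 - q_\alpha$, so that $u_\alpha = v_\alpha + w_\alpha$ is a unitary. Expanding $u_\alpha^* \phi(a) u_\alpha$ yields the main term $v_\alpha^* \phi(a) v_\alpha$ together with three correction terms $v_\alpha^* \phi(a) w_\alpha$, $w_\alpha^* \phi(a) v_\alpha$, $w_\alpha^* \phi(a) w_\alpha$, each of which carries a factor $1 - p_\alpha$ on at least one side (since $w_\alpha = w_\alpha(1-p_\alpha)$ and $w_\alpha^* = (1-p_\alpha)w_\alpha^*$). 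As all factors are norm-bounded and $1 - p_\alpha \to 0$, joint $\sigma$-strong* continuity of multiplication on bounded sets forces every correction term to $0$. Hence $u_\alpha^* \phi(a) u_\alpha \to \psi(a)$, and relabelling the net by $u_\alpha \mapsto u_\alpha^*$ converts this to the asserted form $u_\alpha \phi(a) u_\alpha^* \to \psi(a)$.

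The main obstacle is the existence of the completing partial isometries $w_\alpha$, i.e.\ the Murray--von Neumann equivalence $1 - p_\alpha \sim 1 - q_\alpha$. Unlike the norm setting, where $\|1 - v_\alpha^* v_\alpha\| < 1$ already forces $v_\alpha$ to be an isometry (cf.\ the opening of the proof of Lemma~\ref{lem:SomeIsometriesForUniquenessSpecialLemma}), $\sigma$-strong* smallness of $1 - p_\alpha$ yields no such conclusion: in a properly infinite algebra a $\sigma$-strong* small projection may nonetheless be equivalent to $1$, as with the tail projections $\sum_{k \ge n} e_{kk}$ in $B(\ell^2)$. This is exactly where the properly infinite, countably decomposable structure enters, through \cite{CGNN} Lemma~3.9: fullness of $\phi$ and $\psi$ controls the defect projections, and Lemma~3.9 then supplies the equivalence $1 - p_\alpha \sim 1 - q_\alpha$ that permits the completion. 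This is in the same spirit as the comparison and Murray--von Neumann arguments in the proof of Corollary~\ref{cor:NormAUE} (cf.\ \cite{KadisonVolume2} Corollary~6.3.5). Once Lemma~3.9 is in hand the remaining estimates are routine, so the only genuine work is verifying its hypotheses in the present situation.
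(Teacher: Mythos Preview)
Your proposal is correct and matches the paper's approach exactly: the paper does not give a detailed proof but simply records that the proposition is ``a consequence of \cite{CGNN} Proposition~4.3 and Lemma~3.9,'' and your argument is precisely an expansion of that route---Proposition~4.3 supplies the partial isometries, unitality forces $1-v_\alpha^*v_\alpha\to 0$, and Lemma~3.9 furnishes the Murray--von~Neumann equivalence of the defect projections needed to complete to unitaries. One small point: the phrase ``joint $\sigma$-strong* continuity of multiplication on bounded sets'' is slightly imprecise here since $w_\alpha$ itself need not converge; what actually makes the correction terms vanish is that each carries a factor $1-p_\alpha\to 0$ multiplied against a uniformly bounded net, and one handles left versus right multiplication by passing to adjoints---but the conclusion is sound.
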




Following the arguments of \cite{KucerovskyNg}, we will use the notion
of a locally dense net. 

\begin{df}  
Let $G$ be a topological group, and let $\Pi$ be 
a uniform structure on $G$ that
is compatible with the topology on $G$. 
Let us say that a net of subgroups $\{ H_{\lambda} \}_{\lambda \in
\Lambda}$ of $G$ is \emph{locally dense} in $G$ if for every entourage
$\mathcal{E} \in \Pi$, for every finite set
$\{ g_i \}_{i=1}^{n}$  in $G$, there is a $\lambda_0 \in \Lambda$ such that
for $\lambda \geq \lambda_0$ and for $1 \leq i \leq n$, 
$(\{ g_i \} \times H_{\lambda}) \cap \mathcal{E} \neq \phi$.  
\end{df}


The next lemma is an exercise in point set topology.

\begin{lem}  Let $G$ be a topological group, and let $\Pi$ be 
a uniform structure on $G$ that is compatible with the topology on $G$.  
If $(G, \Pi)$ has
a locally dense net $\{ H_{\lambda} \}_{\lambda \in \Lambda}$
consisting of extremely amenable subgroups, then $G$ is extremely 
amenable.
\label{lem:pointwise}
\end{lem}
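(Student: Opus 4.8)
The plan is to prove that extreme amenability passes to a group $G$ that admits a locally dense net of extremely amenable subgroups, by a standard fixed-point argument reminiscent of the Arzel\`a--Ascoli / partition-of-unity compactness trick used in the theory of L\'evy groups. First I would fix a continuous action of $G$ on a compact Hausdorff space $X$ and, for each $\lambda \in \Lambda$, use extreme amenability of $H_\lambda$ to produce a point $x_\lambda \in X$ fixed by $H_\lambda$. Passing to a subnet (using compactness of $X$), I may assume $x_\lambda \to x_0$ for some $x_0 \in X$. The goal is then to show $x_0$ is fixed by all of $G$.

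To do this, I would argue by contradiction: suppose $g \cdot x_0 \neq x_0$ for some $g \in G$. Choose disjoint open neighbourhoods $V$ of $x_0$ and $W$ of $g\cdot x_0$; by continuity of the action and the fact that $X$ is compact Hausdorff (hence the action map $G \times X \to X$ is continuous), there is a neighbourhood $\mathcal{O}$ of $g$ in $G$ and a neighbourhood $V' \subseteq V$ of $x_0$ such that $\mathcal{O} \cdot V' \subseteq W$. Now translate $\mathcal{O}$ into the language of the uniform structure $\Pi$: since $\Pi$ is compatible with the topology, there is an entourage $\mathcal{E} \in \Pi$ such that $\mathcal{E}[g] \subseteq \mathcal{O}$ (where $\mathcal{E}[g] = \{ h : (g,h) \in \mathcal{E}\}$). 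By the locally dense hypothesis applied to the singleton $\{g\}$ and the entourage $\mathcal{E}$, there is $\lambda_0$ such that for all $\lambda \ge \lambda_0$ there is $h_\lambda \in H_\lambda$ with $h_\lambda \in \mathcal{E}[g] \subseteq \mathcal{O}$.

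Now I would derive the contradiction. For $\lambda$ large enough we simultaneously have $h_\lambda \in \mathcal{O}$ and $x_\lambda \in V'$ (the latter since $x_\lambda \to x_0$ and $V'$ is a neighbourhood of $x_0$). Then on one hand $h_\lambda \cdot x_\lambda \in \mathcal{O} \cdot V' \subseteq W$; on the other hand $h_\lambda \in H_\lambda$ fixes $x_\lambda$, so $h_\lambda \cdot x_\lambda = x_\lambda \in V' \subseteq V$. Since $V \cap W = \emptyset$, this is impossible. Hence $g \cdot x_0 = x_0$ for every $g \in G$, so $x_0$ is a global fixed point and $G$ is extremely amenable.

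The only real subtlety — and the step I would be most careful with — is the translation between the topological neighbourhood $\mathcal{O}$ of $g$ and an entourage $\mathcal{E}$ of the uniformity, i.e. checking that compatibility of $\Pi$ with the topology genuinely gives $\mathcal{E}[g] \subseteq \mathcal{O}$; one should also make sure the subnet extraction for $x_\lambda \to x_0$ does not interfere with the ``for $\lambda \ge \lambda_0$'' quantifier in the definition of locally dense (it does not, since that definition survives passage to a cofinal subnet). Everything else is routine point-set topology, exactly as the lemma's statement advertises.
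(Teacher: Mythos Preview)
Your argument is correct and complete. The paper itself provides no proof at all --- it simply announces the lemma as ``an exercise in point set topology'' and moves on --- so there is nothing to compare your approach against; you have supplied exactly the routine fixed-point/subnet argument that the authors evidently had in mind. Your two flagged subtleties are handled properly: compatibility of $\Pi$ with the topology means precisely that the sets $\mathcal{E}[g]$, as $\mathcal{E}$ ranges over $\Pi$, form a neighbourhood base at $g$, so some $\mathcal{E}[g] \subseteq \mathcal{O}$ exists; and the locally-dense condition is a tail condition on the net, hence is inherited by any subnet (via the ``eventually in every tail'' property of subnets, which is slightly weaker than cofinality but is all you need). One cosmetic remark: the parenthetical ``hence the action map $G \times X \to X$ is continuous'' is unnecessary, since that is the definition of a continuous action and does not use compactness of $X$.
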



   We now fix a notation.  For a von Neumann algebra $\Mul$ and
a normal positive linear functional $\chi \in \Mul_*$, 
let $\| . \|^{\sharp}_{\chi}$ be the seminorm on $\Mul$ that is defined
by $\| x \|^{\sharp}_{\chi} =_{df} \sqrt{\chi(x^*x) + \chi(x x^*)}$ for all
$x \in \Mul$.  

\begin{thm}  Let $\Mul$ be a countably decomposable
properly infinite injective von Neumann
algebra.
Let $U(\Mul)$ be the unitary group of $\Mul$, given the $\sigma$-strong*
topology.

Then $\Mul$ is injective if and only if $U(\Mul)$ is extremely
amenable.
\label{thm:extremeamenability}
\end{thm}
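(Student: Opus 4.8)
The plan is to prove both directions, with the "if" direction (extreme amenability $\Rightarrow$ injectivity) being the quick one and the "only if" direction (injectivity $\Rightarrow$ extreme amenability) being where the real work lies. For the "if" direction, I would argue by contraposition: if $\Mul$ is not injective, then since it is properly infinite, it contains a copy of a non-injective factor, or more directly one can use the fact that a non-injective von Neumann algebra has a non-amenable unitary group in the $\sigma$-strong* topology — for instance, if $\Mul$ fails to be injective it contains $L(\Fr_2)$ as a von Neumann subalgebra (or at least its unitary group contains a discrete free subgroup whose closure is non-extremely-amenable), and extreme amenability passes poorly to such subgroups. I should be careful here and perhaps instead cite the known equivalence direction from \cite{GP2} or argue that extreme amenability implies amenability implies (via de la Harpe / Connes) injectivity.

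For the main direction, injectivity $\Rightarrow$ extreme amenability, the strategy is to exhibit a locally dense net of extremely amenable subgroups of $U(\Mul)$ and invoke Lemma \ref{lem:pointwise}. The natural candidates for the subgroups $H_\lambda$ are unitary groups $U(\Mul_\lambda)$ of injective subfactors $\Mul_\lambda \subseteq \Mul$ that are of the form $R_\infty = R \otimes B(\h)$ (the injective factor of type $I_\infty$ or the hyperfinite $II_\infty$ factor, or rather a properly infinite injective subalgebra whose unitary group is already known to be extremely amenable by the Gromov--Milman-type results, e.g. $U(B(\h))$ with $\sigma$-strong* topology). Since $\Mul$ is properly infinite and countably decomposable, one can find (using injectivity) an increasing net of finite-dimensional or hyperfinite subalgebras, amplify to get properly infinite injective subalgebras $\Mul_\lambda$ with $U(\Mul_\lambda)$ extremely amenable; the point is that $\bigcup \Mul_\lambda$ is $\sigma$-strong* dense. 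The crucial input making the net \emph{locally dense} in the uniform-structure sense — and not merely having dense union — is Proposition \ref{prop:VNAUniqueness}: given any finite set of unitaries $u_1, \dots, u_n \in U(\Mul)$ and any $\|\cdot\|^\sharp_\chi$-neighborhood, I need to find, for each $i$, a unitary in $H_\lambda$ that is $\sigma$-strong* close to $u_i$. This is where the uniqueness theorem does the heavy lifting: one sets up two full unital $*$-homomorphisms into $\Mul$ — roughly the inclusion of a suitable subalgebra and a perturbation of it — and the conclusion that they are approximately unitarily equivalent in the $\sigma$-strong* topology forces the locally dense property.

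Concretely, I would fix an entourage coming from finitely many normal states $\chi_1, \dots, \chi_m$ and $\epsilon > 0$, fix $u_1, \dots, u_n \in U(\Mul)$, let $\C$ be the C*-algebra they generate together with $1_\Mul$, and find a properly infinite injective subalgebra $\Mul_\lambda$ large enough to "$\sigma$-strong* approximately contain" $\C$; then apply Proposition \ref{prop:VNAUniqueness} to the inclusion $\C \hookrightarrow \Mul$ and a nearby full $*$-homomorphism $\C \to \Mul_\lambda$ (both full because $\Mul$, being properly infinite type III-like on the relevant summand, or more carefully because we arrange fullness by construction) to produce a unitary $w \in \Mul$ with $w \C w^*$ essentially inside $\Mul_\lambda$ in the appropriate seminorms. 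Pulling back $w^* u_i w \in U(\Mul_\lambda)$ gives the required elements witnessing local density. The main obstacle I anticipate is the fullness hypothesis in Proposition \ref{prop:VNAUniqueness}: $\Mul$ need not be a factor and an arbitrary finitely generated subalgebra $\C$ need not embed \emph{fully}, so I will need to either (i) first reduce to the factor case or handle the central decomposition, or (ii) replace $\C$ by a carefully enlarged subalgebra (absorbing matrix units coming from the proper infiniteness, using $\Mul \cong \Mul \otimes B(\h)$) so that the resulting homomorphism is genuinely full; managing this reduction cleanly, together with checking that extreme amenability of $U(\Mul_\lambda)$ is available for the chosen $\Mul_\lambda$, is the technical heart of the argument.
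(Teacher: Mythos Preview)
Your overall architecture matches the paper's: the ``if'' direction goes via de~la~Harpe (extreme amenability $\Rightarrow$ amenability of $U(\Mul)$ $\Rightarrow$ injectivity of $\Mul$), and for ``only if'' you reduce to Lemma~\ref{lem:pointwise} by exhibiting a locally dense net of extremely amenable subgroups, with Proposition~\ref{prop:VNAUniqueness} doing the work. You also correctly isolate the real obstacle, namely that the inclusion of the C*-algebra $\A$ generated by the given unitaries into $\Mul$ need not be \emph{full}, so Proposition~\ref{prop:VNAUniqueness} does not apply to it directly.

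The gap is that your proposed remedies (reduce to factors, or enlarge $\A$ somehow) are not what is needed, and the paper's actual device is the missing idea. One writes $\Mul \cong \N \overline{\otimes} \mathbb{B}(\h)$ and picks \emph{any} full unital $*$-homomorphism $\psi : \A \to 1_{\N} \otimes \mathbb{B}(\h)$ (this exists because $\mathbb{B}(\h)$ is simple and $\A$ is separable); no approximate containment of $\A$ is required. Then, using proper infiniteness, choose isometries $S_1, S_2 \in \Mul$ with $S_1 S_1^* + S_2 S_2^* = 1$ and with $S_2 S_2^*$ small enough in $\|\cdot\|^{\sharp}_{\chi}$ that $\phi(a) := S_1 a S_1^* + S_2 \psi(a) S_2^*$ satisfies $\|\phi(x_k) - x_k\|^{\sharp}_{\chi} < \epsilon/3$. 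The map $\phi$ is automatically full because its second summand is, so Proposition~\ref{prop:VNAUniqueness} applies to the pair $(\phi, \psi)$ and produces a unitary $u$ with $u \psi(x_k) u^*$ close to $\phi(x_k)$, hence to $x_k$. The subgroup witnessing local density is then $U\bigl(u(1_{\N} \otimes \mathbb{B}(\h))u^*\bigr) \cong U(\mathbb{B}(\h))$, extremely amenable by Gromov--Milman. In short, rather than forcing the inclusion to be full, one manufactures a full map $\sigma$-strong*-close to the inclusion by absorbing a small full summand and compares \emph{that} to $\psi$; your option~(ii) gestures toward this, but the concrete $S_1, S_2$ absorption trick is what you are missing.
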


\begin{proof}

 The ``if'' direction follows from \cite{delaHarpe}  and the
 fact that extreme amenability implies amenability.

 We now prove the ``only if'' direction.
  
  Since $\Mul$ is countably decomposable, let $\chi \in \Mul_*$ be
a faithful normal state on $\Mul$.  Then the $\sigma$-strong* topology
on $U(\Mul)$ is given by the metric induced by the norm
$\| . \|^{\sharp}_{\chi}$ (which gives a uniform structure 
on $U(\Mul)$).

Let $\epsilon > 0$ be given and let $x_1, x_2, ..., x_m$
be a finite subset of $U ( \Mul )$.

Now since $\Mul$ is properly infinite, let
$\N$ be a von Neumann algebra and $\h$ a separable infinite dimensional
Hilbert space such that $\Mul \cong \N \otimes \mathbb{B}( \h)$.

Let $\A$ be the unital C*-subalgebra of $\Mul$ that is generated by
$\{ x_1, x_2, ..., x_m \}$.

Now let $\psi : \A \rightarrow 1_{\N} \otimes \mathbb{B}(\h) \subset \Mul$
be a full unital *-homomorphism.
Since $\Mul$ is properly infinite, let $S_1, S_2 \in \Mul$ 
be isometries with
orthogonal ranges with $S_1 S_1^* + S_2 S_2^* = 1_{\Mul}$ 
such that 
$$\| x_k - (S_1 x_k S_1^* + S_2 \psi(x_k) S_2^*) \|^{\sharp}_{\chi}
< \epsilon/3$$  
for $1 \leq k \leq m$

Let $\phi : \A \rightarrow \Mul$ be the full unital *-homomorphism
that is given by $\phi(a) =_{df} S_1 a S_1^* + S_2 \psi(a) S_2^*$
for all $a \in \A$.
Hence,  
\begin{equation}
\| x_k - \phi(x_k) \|^{\sharp}_{\chi} < \epsilon/3
\label{equ:EAmenableI}
\end{equation}
for $1 \leq k \leq m$.

By Proposition \ref{prop:VNAUniqueness}, 
let $u \in \Mul$ be a unitary
such that
$$\| \phi(x_k) - u \psi(x_k) u^* \|^{\sharp}_{\chi} < \epsilon/3$$
for $1 \leq k \leq m$.
From this and (\ref{equ:EAmenableI}),
$$\| x_k - u \psi(x_k) u^* \|^{\sharp}_{\chi} < \epsilon$$
for $1 \leq k \leq m$.

     By \cite{GromovMilman}, the unitary group of  
$u (1_{\N} \otimes \mathbb{B}(\h)) u^* \cong \mathbb{B}(\h)$, given
the $\sigma$-strong* topology, is extremely amenable.
Hence, by Lemma \ref{lem:pointwise}, since
$\epsilon$ and $x_1, x_2, \cdots, x_m$ are arbitrary (so we can construct
an appropriate locally dense net),
$U(\Mul)$ is extremely amenable.
\end{proof}





\section{The Kirchberg property}

    If $G$ is a topological group and $\Gamma$ is a discrete group, then
let $Rep(\Gamma, G)$ be the collection of group homomorphisms from
$\Gamma$ to $G$.  $Rep( \Gamma, G)$ is naturally a subset of the
(topological) infinite product $G^{\Gamma}$ (which has the product topology).
Give $Rep(\Gamma, G)$
the restriction of the topology from $G^{\Gamma}$.  We call this topology
on $Rep(\Gamma, G)$ the \emph{pointwise-$G$ topology}.

\begin{df}  A topological group $G$ is said to have the
\emph{Kirchberg property} if 
every homomorphism from
$\Fr_{\infty} \times \Fr_{\infty}$ into $G$ can be approximated, in the
pointwise-$G$ topology,
by homomorphisms with precompact image.
\label{df:KirchbergProperty}
\end{df}

The Kirchberg property seems to be a type of amenability property;  in
particular, it implies amenability.
    In \cite{Pestov}, Pestov and Uspenskij showed that the isometry group of the Urysohn
metric space has the Kirchberg property.
In the same paper, Pestov and Uspenskij
also observed the following equivalences:

\begin{prop}   The following two statements are equivalent:
\begin{enumerate}
\item The Connes embedding problem has an affirmative answer.
\item The unitary group $U( \h)$ over a separable infinite dimensional
Hilbert space $\h$,
given the weak* topology, has the Kirchberg property.
\end{enumerate}
\label{prop:Pestov}
\end{prop}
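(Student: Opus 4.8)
The plan is to derive this from Kirchberg's tensor--product characterization of the Connes embedding problem together with the residual finite-dimensionality of $C^*(\Fr_\infty)$. Set $A =_{df} C^*(\Fr_\infty)$, so that $C^*(\Fr_\infty \times \Fr_\infty) = A \otimes_{\max} A$; a homomorphism $\Fr_\infty \times \Fr_\infty \to U(\h)$ is then the same thing as a $*$-representation of $A \otimes_{\max} A$ on $\h$ (i.e.\ a pair of commuting unitary representations of $\Fr_\infty$), and such a homomorphism has precompact image in the weak$^*$ topology (which on $U(\h)$ agrees with the strong and $\sigma$-strong$^*$ topologies) precisely when the associated representation is a direct sum of finite-dimensional subrepresentations --- apply the Peter--Weyl theorem to the compact group which is the closure of the image. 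I will also use that $A$ is residually finite-dimensional, say $\mu = \bigoplus_k \mu_k$ is a faithful representation of $A$ with each $\mu_k$ finite-dimensional; by functoriality and faithfulness of the minimal tensor norm this makes $A \otimes_{\min} A$, and hence each $M_n(\mathbb{C}) \otimes (A \otimes_{\min} A) = M_n(A \otimes_{\min} A)$, residually finite-dimensional as well.

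For $(1) \Rightarrow (2)$, I would assume the Connes embedding problem has an affirmative answer; by Kirchberg's theorem the canonical surjection $A \otimes_{\max} A \to A \otimes_{\min} A$ is an isomorphism, so the representation $\tilde\pi$ attached to a given $\pi : \Fr_\infty \times \Fr_\infty \to U(\h)$ may be regarded as a representation of the residually finite-dimensional algebra $C =_{df} A \otimes_{\min} A$. Given a finite set $F_0$ of group elements, vectors $\xi_1, \dots, \xi_n \in \h$ and $\epsilon > 0$, put $F =_{df} F_0^{-1}F_0 \cup \{e\}$ and consider the completely positive map $\Theta : C \to M_n(\mathbb{C})$, $\Theta(c)_{jk} =_{df} \langle \tilde\pi(c)\xi_j, \xi_k\rangle$, equivalently the corresponding positive linear functional on $M_n(C)$. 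Since the states arising from finite-dimensional representations are weak$^*$-dense in the state space of the residually finite-dimensional algebra $M_n(C)$, translating back through the standard correspondence between completely positive maps $C \to M_n(\mathbb{C})$ and positive functionals on $M_n(C)$ I obtain a finite-dimensional representation $\sigma$ of $C$ (equivalently of $\Fr_\infty \times \Fr_\infty$) and vectors $\eta_1, \dots, \eta_n$ in its space with $\langle \sigma(g)\eta_j, \eta_k\rangle$ as close to $\langle \pi(g)\xi_j, \xi_k\rangle$ as I wish, for all $g \in F$ and all $j,k$. Padding $\sigma$ by any family of finite-dimensional representations whose Hilbert-space direct sum is isomorphic to $\h$ yields $\pi' : \Fr_\infty \times \Fr_\infty \to U(\h)$ with precompact image for which the tuples $\{\pi(g)\xi_j : g \in F_0 \cup \{e\},\, j\}$ and $\{\pi'(g)\eta_j : g \in F_0 \cup \{e\},\, j\}$ have nearly equal Gram matrices (the inner products $\langle \pi(h)\xi_j, \pi(g)\xi_k\rangle$ reduce to matrix coefficients at $h^{-1}g \in F$). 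A routine Hilbert-space perturbation lemma --- two finite tuples with $\delta$-close Gram matrices inside an infinite-dimensional space can be matched by a unitary up to an error tending to $0$ with $\delta$ --- then produces $W \in U(\h)$ with $\|W\pi'(g)W^* \xi_j - \pi(g)\xi_j\| < \epsilon$ for $g \in F_0$, $1 \le j \le n$. Since $W\pi'(\cdot)W^*$ still has precompact image, $\pi$ lies in the closure of the precompact-image homomorphisms, i.e.\ $U(\h)$ has the Kirchberg property.

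For $(2) \Rightarrow (1)$, I would assume $U(\h)$ has the Kirchberg property and let $\varphi$ be a state on $A \otimes_{\max} A$; by Kirchberg's theorem it is enough to show $\varphi$ is $\|\cdot\|_{\min}$-continuous, since then $\|\cdot\|_{\max} = \|\cdot\|_{\min}$ on the algebraic tensor product and the Connes embedding problem has an affirmative answer. Let $(\h_\varphi, \tilde\pi_\varphi, \xi)$ be the GNS triple; as $\h_\varphi$ is separable I may view $\h_\varphi \subseteq \h$, extend $\tilde\pi_\varphi$ by the trivial representation on the complement, and so obtain $\pi : \Fr_\infty \times \Fr_\infty \to U(\h)$ with $\langle\pi(g)\xi, \xi\rangle = \varphi(g)$ for all $g$. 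Fixing $c = \sum_g \lambda_g g$ in the group algebra $\mathbb{C}[\Fr_\infty \times \Fr_\infty]$ and $\delta > 0$, I use the Kirchberg property to choose $\pi'$ with precompact image and $|\langle\pi'(g)\xi, \xi\rangle - \langle\pi(g)\xi, \xi\rangle| < \delta/(1 + \sum_g |\lambda_g|)$ for every $g$ occurring in $c$. Then $\pi'$ is a direct sum of finite-dimensional representations; each such factors through $A \otimes_{\min} A$ (the two commuting finite-dimensional operator algebras it generates are nuclear, so min and max agree on their tensor product), hence $\|\pi'(c)\| \le \|c\|_{\min}$. Therefore
$$ |\varphi(c)| = \left| \sum_g \lambda_g \langle\pi(g)\xi,\xi\rangle \right| \le \delta + \left| \sum_g \lambda_g \langle\pi'(g)\xi,\xi\rangle \right| = \delta + |\langle\pi'(c)\xi,\xi\rangle| \le \delta + \|c\|_{\min}, $$
and letting $\delta \to 0$ gives $|\varphi(c)| \le \|c\|_{\min}$ on the dense subalgebra $\mathbb{C}[\Fr_\infty \times \Fr_\infty]$, so $\varphi$ is $\|\cdot\|_{\min}$-continuous.

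The main obstacle is the first direction: converting the soft fact that finite-dimensional states are weak$^*$-dense into an honest pointwise approximation of the fixed homomorphism $\pi$ by precompact-image homomorphisms on the \emph{same} Hilbert space $\h$. This rests on three ingredients I would isolate as lemmas: the identification of precompact-image homomorphisms into $U(\h)$ with direct sums of finite-dimensional representations; the correspondence between completely positive maps $C \to M_n(\mathbb{C})$ and positive functionals on $M_n(C)$, which is what lets residual finite-dimensionality of $C$ (equivalently of $M_n(C)$) be brought to bear; and the Gram-matrix perturbation lemma, where the infinite-dimensionality of $\h$ is exactly what allows the matching isometry to be completed to a unitary. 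The second direction is comparatively routine once one records that finite-dimensional representations of $A \otimes_{\max} A$ are automatically $\|\cdot\|_{\min}$-bounded and invokes Kirchberg's theorem.
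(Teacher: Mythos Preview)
The paper does not supply its own proof of this proposition: it is stated as an observation of Pestov and Uspenskij and attributed to \cite{Pestov}, with no argument given in the present paper. There is therefore no proof here to compare your attempt against.

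That said, your argument is correct and is essentially the one behind the cited result. Both directions hinge on Kirchberg's theorem that the Connes embedding problem is equivalent to $C^*(\Fr_\infty)\otimes_{\max}C^*(\Fr_\infty)=C^*(\Fr_\infty)\otimes_{\min}C^*(\Fr_\infty)$, together with Choi's residual finite-dimensionality of $C^*(\Fr_\infty)$ (and hence of its minimal tensor square). Your identification of precompact-image homomorphisms into $U(\h)$ with direct sums of finite-dimensional subrepresentations via Peter--Weyl is exactly the bridge between the Kirchberg property as defined here and the RFD picture. In $(1)\Rightarrow(2)$ the Gram-matrix/unitary-matching step is the right way to pass from weak$^*$-density of finite-dimensional matrix coefficients to genuine pointwise-strong approximation on the fixed space $\h$; the infinite-dimensionality of $\h$ is indeed what lets the partial isometry be completed to a unitary. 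In $(2)\Rightarrow(1)$ your GNS-plus-approximation argument showing every state on the max tensor product is $\|\cdot\|_{\min}$-bounded is the standard route. One small point worth making explicit: the separability of $\h_\varphi$ that you invoke follows because $C^*(\Fr_\infty)$, and hence its maximal tensor square, is separable.
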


Following this line of thought, we use our uniqueness theorem
(Proposition \ref{prop:VNAUniqueness}) and ideas from the theory of
absorbing extensions to show that the Connes embedding problem
is equivalent to many topological groups having the Kirchberg
property.
Firstly, we fix some notation.  For the multiplier algebra $\Mul(\B)$
of a stable $C^*$-algebra $\B$, let $U( \Mul(\B))$ be the unitary
group of $\Mul(\B)$, given the strict topology.
For a von Neumann algebra $\Mul$, let $U( \Mul )$ be the unitary group
of $\Mul$, given the weak* topology (which, on the unitary group, is
the same as the strong topology, weak topology,
$\sigma$-strong* topology, $\sigma$-weak topology etc.).

\begin{thm}  The following statements are equivalent:
\begin{enumerate}
\item The Connes embedding problem has an affirmative answer.

\item For every unital separable simple nuclear $C^*$-algebra $\A$,
$U( \Mul( \A \otimes \K))$ has the Kirchberg property.

\item There exists a unital separable simple nuclear $C^*$-algebra $\A$
such that $U(\Mul(\A \otimes \K))$ has the Kirchberg property.
\item For every countably decomposable
injective properly infinite von Neumann algebra $\Mul$,  
$U( \Mul )$ has the Kirchberg property.
\item There exists an injective properly infinite
von Neumann algebra $\Mul$ with separable predual 
such that $U( \Mul )$ has the Kirchberg property.
\end{enumerate}
\end{thm}

\begin{proof}

    That (2) implies (1) follows from Proposition \ref{prop:Pestov}.
That (2) implies (3) and that (4) implies (5) are immediate.

     We now prove that (1) implies (2).
Let $\pi : \Fr_{\infty} \times \Fr_{\infty} \rightarrow
U ( \Mul( \A \otimes \K))$ be a group homomorphism.

Recall that the strict topology on $\Mul( \A \otimes \K)$ is generated
by seminorms of the form $\|. \|_b$, $b \in \A \otimes \K$,   where
for all $m \in \Mul(\A \otimes \K)$ and for all $b \in \A \otimes \K$,
$\| m \|_b =_{df} \| mb \| + \| b m \|$.

So let $\epsilon > 0$ and let a finite subset
$\{ x_1, x_2, ..., x_m \} \subseteq \Fr_{\infty} \times
\Fr_{\infty}$ and
a finite subset $\G \subseteq \A \otimes \K$ be given. (We will work
with the seminorms corresponding to elements of $\G$).
Contracting $\epsilon > 0$ if necessary, we may assume that
the elements of $\G$ all have norm less than or equal to one.

Let $S_1, S_2$ be isometries in $\Mul(\A \otimes \K)$ such that
the following hold:
\begin{equation} \label{equ:KirchbergS_1S_2Splitting} \end{equation}
\begin{enumerate}
\item[i.] $S_1 S_1^* + S_2 S_2^* = 1_{\Mul(\A \otimes \K)}$.
\item[ii.] $\| S_1 \pi(x_k) S_1^* - \pi(x_k) \|_b < \epsilon/8$
for $1 \leq k \leq m$.
\item[iii.] $\| S_2 S_2^* \|_b < \epsilon/8$
\end{enumerate}

Let $\C \subset \Mul(\A \otimes \K)$ be the unital C*-subalgebra
that is generated by $\{ \pi(x_1), \pi(x_2), ..., \pi(x_m) \}$.
Let $\psi : \C \rightarrow 1_{\A} \otimes \mathbb{B}(\h) \subset
\Mul(\A \otimes \K)$ be a unital full *-homomorphism.

Let $\phi : \C \rightarrow \Mul(\A \otimes \K)$ be the unital full
*-homomorphism that is given by
$\phi(c) =_{df} S_1 c S_1^* + S_2 \psi(c) S_2^*$.
Note that by (\ref{equ:KirchbergS_1S_2Splitting}), we have that
\begin{equation}
\| \phi(\pi(x_k)) - \pi(x_k) \|_b < \epsilon/4
\label{equ:KirchbergNextStep}
\end{equation}
for $1 \leq k \leq m$ and $b \in \G$.

By \cite{EK},  $\phi$ and $\psi$ are approximately unitarily
equivalent (in the pointwise-norm topology).
Hence, let $u \in \Mul(\A \otimes \K)$ be a unitary such that
$\| \phi(\pi(x_k)) - u \psi(\pi(x_k)) u^* \| <\epsilon/8$
for $1 \leq k \leq m$.
From this and (\ref{equ:KirchbergNextStep}),
\begin{equation}
\| \pi(x_k) - u \psi(\pi(x_k)) u^* \|_b < \epsilon/2
\label{equ:KirchbergAlmostLastStep}
\end{equation}
for $1 \leq k \leq m$ and $b \in \G$.
But by (1) and Proposition \ref{prop:Pestov},
$u(1_{\A} \otimes \mathbb{B}(\h)) u^*$ is assumed to
have the Kirchberg property.
Hence, there is a group homomorphism
$\mu : \Fr_{\infty} \times \Fr_{\infty} \rightarrow
u(1_{\A} \otimes \mathbb{B}(\h)) u^*$ with precompact range such that
$\| \mu(x_k) - u \psi(\pi(x_k)) u^* \|_b < \epsilon/2$
for $1 \leq k \leq m$ and $b \in \G$.
From this and (\ref{equ:KirchbergAlmostLastStep}),
we have that
$$\| \pi(x_k) -  \mu(x_k) \|_b < \epsilon$$
for $1 \leq k \leq m$ and $b \in \G$.
Since $\{ x_1, x_2, ..., x_k \}$, $\epsilon$ and $\pi$ are arbitrary,
$U(\Mul(\A \otimes \K))$ has the Kirchberg property.
This completes the proof that (1) implies (2).

Next, we prove that (3) implies (1).

 Let $\h$ be a separable
infinite dimensional Hilbert space. By Proposition \ref{prop:Pestov},
to prove (1), it suffices to prove that $U(\h)$, given the strong topology,
has the Kirchberg property.

     Since $\A$ is a unital separable simple $C^*$-algebra, we may
assume that $\A \otimes \K$ is a $C^*$-subalgebra of $\mathbb{B}(\h)$
such that
\begin{enumerate}
\item[i.] $\A \otimes \K$ acts nondegenerately on $\h$;
\item[ii.]
$\Mul(\A \otimes \K) = \{ m \in \mathbb{B}(\h) : mb, bm \in \A \otimes \K,
\forall b \in \A \otimes \K \}$; and
\item[iii.]  the strict topology on $\Mul(\A \otimes \K)$ is stronger than
the strong operator topology from $\mathbb{B}(\h)$.
\end{enumerate}
(Note that this implies
that $1_{\Mul(\A \otimes \K)} = 1_{\mathbb{B}( \h)}$.) 

   The rest of the argument is the same as that of (1) implies (2),
except that we reverse $\mathbb{B}(\h)$ (or
$1_{\A} \otimes \mathbb{B}(\h)$) and $\Mul(\A \otimes \K)$.

The arguments that (1) implies (4) and that (5) implies (1) is the same
as that of (1) implies (2) and (3) implies (1) respectively. 
We note that since $\Mul$ is properly infinite,  
$\Mul \cong \Mul \otimes \mathbb{B}(\h)$.  And also, in place of the
uniqueness results from \cite{EK}, we need to use
Proposition \ref{prop:VNAUniqueness}.
\end{proof}


\begin{thebibliography}{00}



\bibitem{ArvesonDuke} W. Arveson, \textit{Notes on extensions of C*-algebras,}
Duke Math. J., \textbf{44} (1977) no. 2, 329--355.



\bibitem{Bratteli} O. Bratteli, \textit{Inductive limits of finite-dimensional
C*-algebras,} Trans. Amer. Math. Soc., \textbf{171} (1972)
195--234.





\bibitem{CGNN}  A. Ciuperca, T. Giordano, P. W. Ng and Z. Niu,
\textit{Amenability and uniqueness,} preprint.  	arXiv:1207.6741v1 [math.OA]

\bibitem{delaHarpe} P. de la Harpe, {\it Moyennabilit\'e du groupe unitaire et
propri\'et\'e P de Schwartz des alg\`ebres de von Neumann} \;
Alg\`ebres d'op\'erateurs,
Lecture Notes in Mathematics Vol. 725 (Springer-Verlag, New York, 1979)
pp. 220--227






\bibitem{DavidsonBook} K. R. Davidson,
\textit{C*-algebras by example,} Fields Institute Monographs,\textbf{6}. 

\bibitem{EK} G. A. Elliott and D. Kucerovsky; An abstract
Brown-Douglas-Fillmore absorption theorem; Pacific J. of Math., \textbf{3}, (2001),
1-25

\bibitem{FutamuraKataokaKishimoto} 
H. Futamura, N. Kataoka and A. Kishimoto,
\textit{Type III representations and automorphisms of some separable 
nuclear C*-algebras,} J. Funct. Anal., \textbf{197}, (2003), 
no. 2, 560--575. 

\bibitem{GP2} T. Giordano and V. Pestov, 
\emph{Some extremely amenable groups related to operator algebras and
ergodic theory,} J. Inst. Math. Jussieu, \textbf{6} (2007) no. 2,
279--315. 

\bibitem{GromovMilman} M. Gromov and V. D. Milman, \textit{A topological
application of the isoperimetric inequality},  Amer. J. Math., \textbf{105}, (1983),
no. 4, 843-854

\bibitem{KadisonVolume2} R. V. Kadison and J. R. Ringrose, \textit{Fundamentals
of the theory of operator algebras.  Vol. II. Advanced theory}
Graduate studies in mathematics;16;1997; Corrected reprint of the
1986 original. American Mathematical Society, Providence, RI

\bibitem{KishimotoFlows} A. Kishimoto, \textit{Approximately inner flows
on separable C*-algebras.  Dedicated to Professor Huzihiro Araki on
the occasion of his 70th birthday.} Rev. Math. Phys., \textbf{14} 
(2002) no. 7--8, 649--673.  

\bibitem{KishimotoInternat}  A. Kishimoto, \textit{The representations
and endomorphisms of a separable nuclear C*-algebra,}
Internat. J. Math., \textbf{14}, (2003), no. 3, 313-326. 

\bibitem{KishimotoOzawaSakai}  A. Kishimoto, N. Ozawa and S. Sakai,
\textit{Homogeneity of the pure state space of a separable C*-algebra,}
Canad. Math. Bull., \textbf{46}, (2003), no. 3, 365--372. 











\bibitem{KucerovskyNg} D. Kucerovsky and P. W. Ng,
\textit{Nuclearity through absorbing extensions,} C. R. Math. 
Acad. Sci. Soc. R. Can., \textbf{32}, (2010), no. 4, 106--119.  



\bibitem{Pestov} V. G. Pestov and V. V. Uspenskij,\textit{Representations of
	residually finite groups by isometries of the Urysohn space,}
J. Ramanujan Math. Soc., \textbf{21},(2006), no. 2, 189-203

\bibitem{Powers} R. T. Powers, \textit{Representations of uniformly 
hyperfinite algebras and their associated von Neumann rings,}
Ann. Math., \textbf{86}, (1967), 138--171.  

\bibitem{Veech} W.A. Veech, \textit{Topological dynamics,}
Bull. Amer. Math. Soc., \textbf{83}, (1977), 775--830.  





\end{thebibliography}
\end{document}